\newcommand{\ra}{\rightarrow}
\newcommand{\pr}{\prime}
\newcommand{\de}{\partial}
\newcommand{\te}{\theta}
\newcommand{\N}{\mathbb{N}}
\newcommand{\R}{\mathbb{R}}
\newcommand{\Z}{\mathbb{Z}}
\newcommand{\abs}[1]{\left\lvert #1 \right\rvert}
\newcommand{\tld}[1]{\widetilde{#1}}
\newcommand{\lbar}[1]{\overline{#1}}
\newcommand{\ubar}[1]{\underline{#1}}
\newcommand{\id}{\mathrm{id}}
\DeclareMathOperator{\homeo}{Homeo}
\renewcommand{\coprod}{\rotatebox[origin = c]{180}{$\prod$}}
\newcommand\independent{\protect\mathpalette{\protect\independenT}{\perp}}
\def\independenT#1#2{\mathrel{\rlap{$#1#2$}\mkern2mu{#1#2}}}
\newcommand{\hookuparrow}{\mathrel{\rotatebox[origin=c]{90}{$\hookrightarrow$}}}
\newcommand{\tikzmark}[1]{\tikz[overlay,remember picture] \node (#1) {};}
\tikzset{square arrow/.style={to path={-- ++(0,-.25) -| (\tikztotarget)}}}
\newcommand{\lgast}{\scalebox{1.5}{$\ast$}}
\newcommand{\nsharp}{\tikz[anchor=base,baseline]{\draw (0,0.4) -- (0,0.05) -- (0.2,0.05) -- (0.2,-0.12); \draw (0,0.25) -- (0.2,0.25) -- (0.2,0.05);}}
\newcommand{\sslash}{\mathbin{/\mkern-6mu/}}
\newtheorem{thm}{Theorem}
\newtheorem{lemma}{Lemma}
\newtheorem{prop}{Proposition}
\newtheorem{conj}{Conjecture}
\theoremstyle{definition}
\newtheorem{defin}{Definition}
\newtheorem*{defin*}{Definition}
\newtheorem*{ext}{Extension}
\theoremstyle{remark}
\newtheorem*{note}{Note}
\newtheorem*{warning}{Warning}
\newtheorem*{ex}{Example}
\begin{document}

\title{Controlled Mather-Thurston Theorems}

\author{Michael Freedman}
\address{\hskip-\parindent
	Michael Freedman \\
	Microsoft Research, Station Q, and Department of Mathematics \\
	University of California, Santa Barbara \\
	Santa Barbara, CA 93106 \\
}

\begin{abstract}
    Classical results of Milnor, Wood, Mather, and Thurston produce flat connections in surprising places. The Milnor-Wood inequality is for circle bundles over surfaces, whereas the Mather-Thurston Theorem is about cobording general manifold bundles to ones admitting a flat connection. The surprise comes from the close encounter with obstructions from Chern-Weil theory and other smooth obstructions such as the Bott classes and the Godbillion-Vey invariant. Contradiction is avoided because the structure groups for the positive results are larger than required for the obstructions, e.g.\ $\operatorname{PSL}(2,\R)$ versus $\operatorname{U}(1)$ in the former case and $C^1$ versus $C^2$ in the latter. This paper adds two types of control strengthening the positive results: In many cases we are able to (1) refine the Mather-Thurston cobordism to a semi-$s$-cobordism (ssc) and (2) provide detail about how, and to what extent, transition functions must wander from an initial, small, structure group into a larger one.

    The motivation is to lay mathematical foundations for a physical program. The philosophy is that living in the IR we cannot expect to know, for a given bundle, if it has curvature or is flat, because we can't resolve the fine scale topology which may be present in the base, introduced by a ssc, nor minute symmetry violating distortions of the fiber. Small scale, UV, ``distortions" of the base topology and structure group allow flat connections to simulate curvature at larger scales. The goal is to find a duality under which curvature terms, such as Maxwell's $F \wedge F^\ast$ and Hilbert's $\int R\ dvol$ are replaced by an action which measures such ``distortions." In this view, curvature results from renormalizing a discrete, group theoretic, structure.
\end{abstract}

\maketitle

\section{Introduction}

Let us recall two sources of inspiration. Milnor and Wood \cites{mil58,wood} considered circle bundles
\begin{tikzpicture}
    \node at (0,0) {$S^1$};
    \draw [->] (0.2,0) -- (0.8,0);
    \node at (1.2,0) {$M^3$};
    \draw [->] (1.2,-0.3) -- (1.2,-0.9);
    \node at (1.2,-1.2) {$\Sigma^2$};
    \node at (-0.4,-1.2) {$B =$};
\end{tikzpicture}
over a closed surface and studied the relationship between the Euler class $\chi(B)$ and the existence of a flat connection for different choices of structure group $G$. When $G = \mathrm{U}(1)$ i.e.\ $B$ is a principle bundle it is well known that there is an integral formula for $\chi(B)$ in terms of the curvature form $\Omega$
\begin{equation}
    \chi(B) = \frac{1}{2\pi} \int_\Sigma \mathrm{Pfaffian}(\Omega)
\end{equation}
So for $G = U(1)$, $B$ has a flat connection iff $\chi(B) = 0$.

On the other hand if $G = \mathrm{PSL}(2,\R)$ and represents on $S^1$ via its action on the circle at infinity of $H^2$, the hyperbolic plane, then $B$ has a flat connection iff
\begin{equation}\label{ineq}
    \abs{\chi(B)} \leq \abs{\chi(\tau(\Sigma))} = \abs{\chi(\Sigma)}
\end{equation}
where $\tau$ denotes the unit tangent bundle.

In the equality case, $B \cong \tau(\Sigma)$, the unit tangent bundle, the local trivialization induced by the flat connection identifies tangent spaces at nearby points $x,y \in H^2$ by using geodesic flow to match the unit spheres at both $x$ and $y$ with the ideal circle at infinity.

Wood then showed that (\ref{ineq}) also holds even if the structure group is relaxed all the way to $\homeo^+(S^1)$, the group of orientation preserving homeomorphisms of the circle.

Similarly, but in a vastly more general context the Mather-Thurston Theorem \cite{thur74a} states that for any smooth manifold $X$ (compact, noncompact, bounded, no boundary, of any finite dimension) the natural map
\begin{equation}\label{homeomap}
    B\homeo^\delta(X) \ra B\homeo(X)
\end{equation}
induced by $\id: \homeo^\delta(X) \ra \homeo(X)$ from discrete to compact open topology is an acyclic map and in particular induces an isomorphism on homology $H_\ast(;\Z)$. By the Atiyah-Hirzebruch spectral sequence (\ref{homeomap}) also induces an isomorphism on bordism so the geometrically minded statement, in lowest regularity $C^0$, of the MT theorem is as follows.

Assume all manifolds have smooth structure.\footnote{Manifolds will always be finite dimensional and oriented.} Let $V^p$ be a $p$-dimensional manifold, possibly with boundary, and $X^q$ a $q$-dimensional manifold (also may have boundary) and
\begin{tikzpicture}[anchor=base, baseline]
    \node at (0,1.1) {$X$};
    \draw [->] (0.2,1.2) -- (0.8,1.2);
    \node at (1.1,1.1) {$B$};
    \draw [->] (1.1,1) -- (1.1,0.4);
    \node at (1.1,0) {$V$};
\end{tikzpicture}
a fiber bundle with transition functions, i.e.\ structure group $= \homeo(X)$. The low differentiability form of MT we consider is:

\begin{thm}[Mather-Thurston \cite{thur74a}]\label{thm:mt}
    Suppose $B$ possesses a $C^0$-transverse foliation $\mathcal{F}_0$ over a neighborhood $\EuScript{N}_1 \de V$. Then there is a cobordism $(W;V;V^\ast)$ from $V$ to $V^\ast$, constant near $\de V$ and covered by cobordism of bundles $(\lbar{B}; B, B^\ast)$, so that $B^\ast$ possesses a $C^0$-transverse foliation $\mathcal{F}$ agreeing with $\mathcal{F}_0$ on a smaller $\EuScript{N}_0(\de V) \subset \EuScript{N}_1(\de V)$. If $B$ has structure group $\homeo_0(X)$ we may arrange that this is also the structure group of $\lbar{B}$.
\end{thm}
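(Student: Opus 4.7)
The plan is to recast the theorem as a classifying-space statement and feed it into the Mather-Thurston acyclicity quoted at~(\ref{homeomap}). Let $\iota\colon B\homeo^{\delta}(X)\to B\homeo(X)$ be induced by the identity of underlying groups. The bundle $B\to V$ is classified by some $f\colon V\to B\homeo(X)$, and giving a $C^{0}$-transverse foliation on $B|_{\EuScript{N}_{1}}$ is equivalent to giving a factorization up to homotopy $\tilde{f}_{0}\colon \EuScript{N}_{1}\to B\homeo^{\delta}(X)$ of $f|_{\EuScript{N}_{1}}$ through $\iota$: bundles pulled back from $B\homeo^{\delta}(X)$ are precisely those with locally constant transition cocycles, and such a cocycle tautologically assembles into a horizontal, hence transverse, $C^{0}$-foliation on the associated bundle.

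Next I would upgrade acyclicity from ordinary to oriented bordism. Because $\iota$ is an acyclic map its homotopy fibre is $\Z$-acyclic, so the Atiyah-Hirzebruch spectral sequence for the relative oriented bordism spectrum forces $\Omega^{SO}_{\ast}\bigl(B\homeo(X),B\homeo^{\delta}(X)\bigr)=0$ (this is exactly the step flagged in the excerpt when it passes from homology to bordism). Shrinking the collar so that $\EuScript{N}_{0}\subset \EuScript{N}_{1}$, the pair $(f,\tilde{f}_{0})$ now determines a class in this relative group and therefore bounds: there exist a compact $(p+1)$-manifold $W$ with $\de W=V\cup_{\EuScript{N}_{0}(\de V)\times I}V^{\ast}$ and product structure near $\EuScript{N}_{0}(\de V)$, together with an extension $\bar{f}\colon W\to B\homeo(X)$ of $f$ whose restriction to $V^{\ast}$ admits a lift $\tilde{f}\colon V^{\ast}\to B\homeo^{\delta}(X)$ extending $\tilde{f}_{0}$ across the collar.

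Pulling the universal bundle back along $\bar{f}$ produces the cobordism of bundles $\lbar{B}$, whose $V^{\ast}$-end is $B^{\ast}$; the lift $\tilde{f}$ supplies the $C^{0}$-transverse foliation $\mathcal{F}$ on $B^{\ast}$, and the rel-collar condition guarantees $\mathcal{F}$ agrees with $\mathcal{F}_{0}$ on $\EuScript{N}_{0}$. The final clause about reducing the structure group to $\homeo_{0}(X)$ is obtained by running the identical argument with $\homeo_{0}$ in place of $\homeo$, since the Mather-Thurston acyclicity holds verbatim for the identity component. The main obstacle is not the homotopy-theoretic vanishing, which is essentially quoted, but the geometric realization step: one must ensure that the cobordism representing the vanishing bordism class is a smooth manifold, that the product collar along $\EuScript{N}_{0}(\de V)$ is genuinely respected so the two foliations match there, and that the locally constant cocycle $\tilde{f}$ is promoted to an honest $C^{0}$-transverse foliation of $B^{\ast}$ rather than merely a classifying-space lift. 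This book-keeping, rather than any new topological input, is where the care is required.
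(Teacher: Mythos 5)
Your argument tracks the paper's own derivation exactly: the paper also deduces Theorem~\ref{thm:mt} from the acyclicity of $B\homeo^\delta(X)\to B\homeo(X)$ by promoting it to an isomorphism on bordism via the Atiyah--Hirzebruch spectral sequence and then reading off the geometric content of a relative null-bordism, precisely the route you take. The paper compresses this to a single sentence and treats the collar/realization bookkeeping you flag as folklore, so your proposal is the same proof with the details written out.
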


\begin{defin}
    We treat as synonymous ``$C^0$-transverse foliation,'' ``topological transverse foliation,'' and ``topoloigcally flat connection.''
\end{defin}

Later, we would like to strengthen certain $C^0$ statements to a bilipschitz category to meet work \cite{mei} of Meigniez which requires some regularity. I would like to thank Sam Nariman for pointing out to me certain subtleties and literature gaps around bilipschitz foliations. The author recalls assertions of W.\ Thurston, during lectures delivered in the 1970s, that the individual leaves of bilip foliations can be locally smoothed, regardless of codimension or dimension. It is possible that he was working in a category, sometimes called ``tame bilipschitz,'' where all transition functions are assumed to be compositions of bilipschitz functions with bilipschitz constant arbitrarily close to one, where the proof looks much easier. (It is open if this is the in fact any restriction at all).

\begin{note}[Note on the bilipschitz category]
    In the discussion immediately following Theorem 2 of \cite{thur74b}, Thurston refers to work of his, apparently never published, that (tame?) bilipschitz foliations of any codimension $q>1$ on a smooth manifold may be (topologically) isotoped so that each compact portion of each leaf is actually a $C^\infty$-submanifold. Later \cite{c01}, Calegari proved such a smoothing statement for 2D foliations of 3-manifolds. When this principle holds it is only the separation between leaves, as seen in holonomies, that might not be differentiable, and not the leaves themselves, so such foliations are not very ``wild'' looking.

    From Thurston’s published work on the homotopy theory of foliations, \cites{thur74a,thur74b,thur76}, it is clear that he obtained that: Every microfoliation which is bilip regular and whose underlying microbundle is smooth can be made leafwise smooth by a concordance of microfoliations. We need only this coherence property. In fact, his methods also yield leafwise smoothness after a bilip isotopy bilip close to the identity. Although tameness is not essential for this part of Thurston’s work, all the bilip homeomorphisms we introduce come from representations of $\tld{\operatorname{PSL}}(2,\R)$, and are tame.
\end{note}

The original proofs of the M-T theorem hold if ``Homeo'' and ``$C^0$'' are both replaced by ``bilipschitz.'' Subsequently, Tsuboi \cite{tsu85} showed that the MT theorem also holds replacing $\homeo(X)$ with once differentiable diffeomorphisms, $C^1(X) \coloneqq \operatorname{Diff}^1(X)$, but this is more subtle. Possibly, we could also work in that context, but do not know. We work in the intermediate bilipschitz category, the most natural one for the constructions. We will not change notation but from here forward ``Homeo" can always be assumed to be bilipschitz i.e.\ all statements involving ``$C^0$'' or ``$\homeo$'' can be read literally or with Homeo and $C^0$ everywhere replaced by bilipschitz. If we explicitly state ``bilipschitz'' it is to highlight that property.

We will explore, in several contexts, adding control to the classical Mather-Thurston theorem, on cobordisms of a general bundle
\begin{tikzpicture}[anchor=base, baseline]
    \node at (0,1.1) {$X$};
    \draw [->] (0.2,1.2) -- (0.8,1.2);
    \node at (1.1,1.1) {$B$};
    \draw [->] (1.1,1) -- (1.1,0.4);
    \node at (1.1,0) {$V$};
\end{tikzpicture},
to a flat bundle
\begin{tikzpicture}[anchor=base, baseline]
    \node at (0,1.1) {$X$};
    \draw [->] (0.2,1.2) -- (0.8,1.2);
    \node at (1.1,1.1) {$B^\ast$};
    \draw [->] (1,1) -- (1,0.4);
    \node at (1.1,0) {$V^\ast$};
\end{tikzpicture}
. The first refinement is that the cobordism $(W;V,V^\ast)$, over the 3-dimensional base $V$, can always be taken to be a semi-$s$-cobordism, defined below. In higher dimensions the same statement is true after a stabilization by any product of spheres $\prod_{j=1}^J S^{i_j}$, $i_j \geq 1$, with sufficiently many factors $J$. The individual factors can all be chosen as circles or spheres of any dimension.

These results depend on the model which we call $C_1$ admitting an interesting $\pi_1$-representation in the bilipschitz category, so they \emph{only} apply up to that degree of differentiability. Possibly, following \cite{tsu85} this might be pushed to class $C^1$, but this is presently unclear.

One simplification in working with differentiability $\leq$ class $C^1$ is that the classifying space for bundles and bundles with Haefliger structures\footnote{In this context, a Haefliger structure is a $C^0$-foliation on $\tau$, the bundle over $B$ of ``vertical'' tangents, i.e.\ tangents to the fibers of $X$, which is transverse to the fibers of $\tau$. It is a theorem of Kister \cite{kister} that topological microbundles are represented by genuine $C^0$-category bundles so there is no essential novelty in defining the space $\Gamma_d^r(X)$ for $r=0$ in the usual manner. We may fix $\Gamma_d^0(X) \coloneqq \operatorname{Bun}(TX,\nu^\ast(\gamma))\sslash\operatorname{Homeo}(X)$; i.e.\ the Borel homotopy quotient of bundle maps from tangents to $X$ to the bundle $\nu^\ast(\gamma)$, $\nu: B \Gamma_d^0 \ra BGL_d$ Haefliger's map and $\gamma$ the universal bundle over $BGL_d$.} (that is on the bundle of vertical tangents $\tau$ to the original bundle) are homotopy equivalent:
\begin{equation}\label{eq:homotopyfiber}
    \ast \simeq \lbar{\Gamma}^0_d(X) \ra \Gamma^0_d(X) \xrightarrow{\simeq} B \homeo(X)
\end{equation}
i.e.\ the homotopy fiber is contractible.

Line (\ref{eq:homotopyfiber}) is most familiar for the fiber space $X = \R^d$ but remains true inserting any manifold fiber $X$ \cite{thur74a}, as explained in the footnote. The homotopy equivalence means that although we draw heavily on the theory of foliation, chiefly \cite{mei}, we only encounter Haefliger structures within certain proofs and not in theorem statements.

Theorems \ref{thm:extend1}, \ref{thm:extend2}, and \ref{thm:four} are geometric refinements of special cases of M-T.

Before stating our theorems we need to define a highly refined, directional, type of cobordism called a semi-$s$-cobordism (ssc).

\begin{defin}[Definition/Discussion]
    An absolute semi-$s$-cobordism (ssc) is a manifold triple $(W^{d+1};V,V^\ast)$ with $\de W = V \independent -V^\ast$, so that the inclusion $\operatorname{inc}: V \hookrightarrow W$ is a simple homotopy equivalence. This implies there is a simple deformation retraction $r: W \ra V$ (so that $r \circ \mathrm{inc}_V = \id_V$). But, crucially, we do not assume $V^\ast \hookrightarrow W$ is a homotopy equivalence. In the relative case we allow a common boundary $\de V = \de V^\ast$, near which $W$ is assumed to be a product.
\end{defin}

Because $r$ is a (simple) homotopy equivalence all the relative groups $H^\ast(W,V;Z[\pi_1(W)]) \cong 0$, so by Lefschetz duality $H_\ast(W,V^\ast;Z[\pi_1(W)]) \cong 0$ so the map $r \circ \mathrm{inc}_{V^\ast}: V^{\ast} \ra V$ is ``almost" itself a homotopy equivalence---it would be by Whitehead's Theorem if $(r \circ \mathrm{inc}_{V^\ast})_\#:\pi_1(V^\ast) \mapsto \pi_1(V)$ were an isomorphism.\footnote{Note that $r \circ \mathrm{inc}_{V^\ast}$ is a degree one map, so $(r \circ \mathrm{inc}_{V^\ast})_\#$ is surjective.} As it is we have an exact sequence
\begin{equation}\label{eq:sequence}
    1 \ra P \ra \pi_1(V^\ast) \ra \pi_1(V) \ra 1
\end{equation}
where $H_1(W,V^\ast;Z[\pi_1(V)]) \cong 0$ implies that $P$ is a perfect group.

In this context Quillen's plus construction \cite{dq} (which attaches an equal number of 2- and 3-cells to kill a perfect group while leaving homology unchanged), when applied to the degree one map $r \circ \mathrm{inc}_{V^\ast}: V^\ast \ra V$ yields $V^{\ast+}$ homotopy equivalent to $V$. In fact, $V^{\ast+} \overset{s}{\simeq} V$ since the vanishing of $\mathrm{Wh}(W,V)$ implies the vanishing of the Whitehead obstruction $\mathrm{Wh}(W,V^\ast)$ by duality \cite{mil71}.

A comment which will soon be useful is that any concatenation of ssc
\[
    (W; V_1,V_l)^\ast = (W_1;V_1,V_1^\ast) \bigcup_{V_1^\ast = V_2} (W_2;V_2,V_2^\ast) \bigcup_{V_2^\ast = V_3} \cdots \bigcup (W_l; V_l, V_l^\ast)
\]
is itself a ssc.

The simplest example of an ssc is obtained by taking a compact contractible manifold $K^n$ and deleting a ball from its interior; $W = \lbar{K^n \setminus B^n}$, with $\de B = V$ and $\de K = V^\ast$.

\begin{warning}
    It is tempting to denote the Quillen construction by $V^{\ast-} \overset{s}{\simeq} V$, rather than $V^{\ast+} \overset{s}{\simeq} V$, because the left boundary $V$ of $W$ is the simpler space, whereas the right boundary $V^\ast$ has been enhanced by a perfect extension. This notation would help keep straight the simple and more elaborate boundaries of $W$. Also in the context of manifolds of dimension $\geq 5$ Quillen's construction can be acomplished by an equal number of 1- and 2-surgeries; in that context cells are not being added, removing that justification for the $+$ notation. But such a change seems on par with changing the sign of the electron (clearly too late); so we stick with the $+$ sign.
\end{warning}

The innovation in Theorem \ref{thm:extend1} below is that the assumed cobordism of the base is ``directional.''  There is the problem end $V$, which a physicist would think of as the IR manifold, and a solution end $V^\ast$, the UV end, in which the fundamental group has been elaborated by a perfect extension over which the solution becomes possible. We hope\footnote{I thank Yasha Eliashberg for this suggestion, particularly in the context of the Madsen-Weiss theorem, and Sam Nariman for the suggestion of applicability to the $c$-principles of \cites{fuchs74,vassi89}.} and expect that this will turn out to be a hidden feature of most \emph{$c$-principle} theorems where homotopy ($h$-principle) by itself is inadequate, but the obstructing singularities can be removed by surgery, i.e.\ by cobordism.

Here is the new idea on the $c$-principle. Holonomy around the boundary of a 2-disk $D^2$ prevents extension of a flat connection over the disk. Traditionally, this has been dealt with by replacing $D^2$ with $S_g$, a surface of high genus and a single boundary component, a radical change of topology. When the base dimension is $\geq 3$, it is often possible to \emph{hide} the additional genus inside a semi-$s$-cobordant manifold, requiring only a subtle change in topology.

This paper came together during the isolation of COVID-19. But I always felt surrounded by good friends. I acknowledge with pleasure the mathematicians and physicists who generously shared their thoughts and insights into this project: foremost Sam Nariman for much useful feedback, as well as Parsa Bonderson, Adam Brown, Danny Calegari, Bob Edwards, Yasha Eliashberg, Slava Krushkul, Roman Lutchyn, Chaitanya Murthy, Gael Meigniez, Cliff Taubes, and Shmuel Weinberg, as well as anonymous referees.

\section{Theorems, Statements, and Conjectures/Questions}
We begin with statements that add control of the bordism $W$ beyond the usual MT Theorem. Theorem \ref{thm:extend1} is actually a special case of theorem \ref{thm:extend2} but we state it separately since its proof uses the proof of MT only as a \emph{black box}. Theorem \ref{thm:extend2} is more general but requires looking \emph{inside} Meigniez's proof of MT at a high level of detail. Theorem \ref{thm:four}, also with a black box proof, provides in a sense one half of the desired (ssc) information on $W$; the retraction $r$ but not the deformation.

Theorem \ref{thm:refine1} assumes that the data on $\de V$ is of a highly refined nature, a flat Lie-connection on a $G$-principal bundle, and makes a homological assumption needed to build a ssc to a flat Lie-connection. Theorem \ref{thm:refine2} studies the implications for Milnor-Wood when the representation $\rho: \pi_1(\Sigma) \ra \operatorname{PSL}(2,\R)$ takes generators close to $\operatorname{U}(1)$.

\begin{thm}\label{thm:extend1}
    Suppose $\dim(V) = 3$ and that the fiber bundle $B =$
    \begin{tikzpicture}[anchor=base, baseline]
        \node at (0,1.1) {$X$};
        \draw [->] (0.2,1.2) -- (0.8,1.2);
        \node at (1.1,1.1) {$E$};
        \draw [->] (1.1,1) -- (1.1,0.4);
        \node at (1.1,0) {$V$};
    \end{tikzpicture}
    has structure group $\homeo_0(X)$ and a $C^0$-transverse foliation $\mathcal{F}_0$ over $\EuScript{N}_1(\de V)$, a neighborhood of boundary $V$. Then there is a ssc $(W;V, V^\ast)$ from $V$ to $V^\ast$, covered by a $\operatorname{Homeo}_0(X)$-bundle $(\lbar{E}; E, E^\ast)$, constant near $\de V$, such that $V^\ast$ possesses a $C^0$-transverse foliation $\mathcal{F}$ agreeing with $\mathcal{F}_0$ on some smaller neighborhood $\EuScript{N}_0(\de V) \subset \EuScript{N}_1(\de V)$.
\end{thm}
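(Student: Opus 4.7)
The plan is to apply the classical Mather--Thurston theorem (Theorem \ref{thm:mt}) as a black box to obtain an initial cobordism $(W_0; V, V_0^\ast)$ with a $\homeo_0(X)$-bundle extension $(\lbar{B}_0; B, B_0^\ast)$ and a $C^0$-transverse foliation $\mathcal{F}_0'$ on $B_0^\ast$ that agrees with $\mathcal{F}_0$ on $\EuScript{N}_0(\de V)$. The generic $W_0$ will not be an ssc, so the task is to modify it on its $V_0^\ast$ end (keeping a collar of $V$ fixed) to produce one while extending the flat bundle.

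The key geometric primitive I rely on is a Milnor--Wood-style surface replacement for a 2-handle. Since $\homeo_0(X)$ is path-connected, $B\homeo_0(X)$ is $1$-connected; combined with the MT-acyclicity of $B\homeo_0^\delta(X) \to B\homeo_0(X)$ this gives $H_1(B\homeo_0^\delta(X); \Z) = 0$, i.e., $\homeo_0^\delta(X)$ is a perfect group. Any monodromy $\mu \in \homeo_0^\delta(X)$ around a loop $\gamma \subset V_0^\ast$ therefore has an expression $\mu = \prod_{i=1}^g [a_i, b_i]$ as a product of commutators, which is the holonomy data for a flat $\homeo_0^\delta(X)$-bundle over a surface $\Sigma_g$ with $\de \Sigma_g = \gamma$. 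I attach the thickened piece $\Sigma_g \times D^2$ along $\gamma \times D^2 \subset V_0^\ast$, in place of a would-be 2-handle $D^2 \times D^2$, equipped with the extended flat bundle. This kills $[\gamma]$ in $\pi_1$ at the cost of introducing $2g$ new generators from $\pi_1(\Sigma_g)$.

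With this primitive I modify $W_0$ in three passes. (i) Surface-thickened handles along loops in $V_0^\ast$ representing a generating set of the cokernel of $\pi_1(V) \to \pi_1(W_0)$ make that map surjective. (ii) Further modifications eliminate the kernel; I would arrange for this kernel to be empty from the start by pre-stabilizing the input bundle $B$ before applying MT, or, failing that, by an interior surgery on $W_0$ supported away from $V$. (iii) Ordinary 3-handles attached along essential $S^2$'s in the enlarged $V^\ast$ (flat extension is automatic since $\pi_1(S^2) = 0$) cancel the $H_2$-generators introduced by the 2-handle portions of the preceding pieces. The resulting $W$ has $V \hookrightarrow W$ a homotopy equivalence by construction; the new generators from the $\Sigma_g$-pieces, together with the commutator relations encoding the monodromy equations, form a perfect normal subgroup $P$ in the exact sequence $1 \to P \to \pi_1(V^\ast) \to \pi_1(V) \to 1$.

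The main obstacle I expect is step (ii): making $\pi_1(V) \to \pi_1(W)$ injective while preserving the flat bundle. Bounding disks in $W_0$ cannot be surgered away purely on the $V_0^\ast$ end, so this step requires a careful arrangement of either the MT-input or of the interior of $W_0$. Vanishing Whitehead torsion of $V \hookrightarrow W$ should follow from the paired 2/3-handle structure of the construction, and perfectness of $P$ follows from the commutator origin of its generators.
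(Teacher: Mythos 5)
Your high-level framing is right — treat MT as a black box and then improve the cobordism to an ssc by replacing disk-bounding data with surface-type data realizing the commutator identity for the monodromy. But your replacement primitive cannot produce a ssc, and this is not a fixable side-issue: it is exactly the obstacle the paper's proof is designed to circumvent.

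Attaching $\Sigma_g \times D^2$ along $\gamma \times D^2 \subset V_0^\ast$ changes the relative homology of $W$: $H_\ast(W, W_0) \cong H_\ast(\Sigma_g, \de\Sigma_g)$, which contributes $\Z^{2g}$ to $H_1$ and $\Z$ to $H_2$. Since a ssc requires $V \hookrightarrow W$ to be a (simple) homotopy equivalence, and in particular a $\Z$-homology isomorphism, your construction fails at step one. It also enlarges $\pi_1(W)$ by $2g$ free generators per attachment, so your pass (i), intended to make $\pi_1(V)\to\pi_1(W)$ surjective, moves in the wrong direction, and pass (iii) (attaching 3-handles along essential spheres) cannot cancel the rank-one $H_2$-contribution of a $\Sigma_g \times D^2$ block because that class is not represented by an embedded $2$-sphere in $V^\ast$. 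Your own worry about pass (ii) is a symptom of the fact that the approach creates more $\pi_1$-trouble than it removes.

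The paper's key idea, which you are missing, is to \emph{hide} the genus inside a $\Z$-homology solid torus rather than expose it. Instead of $\Sigma_g \times D^2$, the paper glues in models $\lbar{\mathcal{C}}_1$ built from the knot complement $C_1 = (P_1 \# -P_1) \setminus \EuScript{N}(k_1)$ (with $P_1$ the $(2,3,7)$-Brieskorn sphere) and its slice-disk complement $\lbar{C}_1 \subset P_1^- \times I$, composed by longitudinal boundary connected sums and Bing doubles. Since $C_1$ is a $\Z$-homology $S^1\times D^2$ and $\lbar{C}_1$ a $\Z$-homology product, gluing never changes $H_\ast(W,V;\Z)$. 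The surface relator $\hat m = \prod_i [a_i,b_i]$ is present in $\pi_1(\mathcal{C}_1)$ via the Bing structure, but the homology classes $a_i, b_i$ die there, exactly as an incompressible surface can sit inside a homology sphere. The representation to $\homeo_0(X)$ is supplied by $\pi_1(P_1) \hookrightarrow \tld{\operatorname{PSL}(2,\R)}$ acting on an interval, suspended into the fiber $X$, combined with the Fisher/Epstein perfectness identity (Proposition~\ref{prop:x_conjugators}). Finally — and this answers precisely the worry you raise in your pass (ii) — the Bing-double positioning makes each model block null-homotopic in the ambient solid torus, hence trivial in $\pi_1(W)$; this gives the $\Z[\pi_1(W)]$-homology isomorphism $H_\ast(W, V;\Z[\pi_1]) = 0$, i.e.\ a genuine (simple) homotopy equivalence $V \hookrightarrow W$, not merely a $\Z$-homology one. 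Without some substitute for the homology-solid-torus models and the Bing-double trick, the proposal does not go through.
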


It is helpful to think of Theorem \ref{thm:extend1}, and the others, in the language of problem solving. The initial bundle $B$ over $V$ with foliation $\mathcal{F}_0$ near $\de V$ is the ``problem'' and the ``solution'' is the bundle at the opposite end $V^\ast$ of the cobordism, or semi-$s$-cobordism (ssc), with $\mathcal{F}_0$ extending from a germ of $\de V$ to $\mathcal{F}$ over all of $V^\ast$. We think of the poblem as \emph{flattening} an initial bundle while changing the base as little as possible and the boundary conditions $\mathcal{F}_0$ not at all. Given a problem on $V$, prob, and another manifold without boundary $Q$ we can pose a new stabilized problem, prob$\times Q$ which is the bundle
\begin{tikzpicture}[anchor=base, baseline]
    \node at (-0.1,1.1) {$X$};
    \draw [->] (0.1,1.2) -- (0.5,1.2);
    \node at (1.1,1.1) {$B \times Q$};
    \draw [->] (1.1,1) -- (1.1,0.4);
    \node at (1.1,0) {$V \times Q$};
\end{tikzpicture}
with foliation near $\de V \times Q = \de(V \times Q)$ defined as the pull back of $\mathcal{F}_0$ under $\mathrm{pr}_1: \de V \times Q \ra \de V$. Note that the original foliation and its pull back are both codimension $= q = \dim(X)$. When we speak of a solution to a stabilized problem there is no requirement that the solution has any product structure.

\begin{thm}\label{thm:extend2}
    Suppose $B$ has structure group lying within $\operatorname{bilipschitz}(X)$, but no assumption of lying within the identity component, and that there is a transverse foliation $\mathcal{F}_0$ of class at least bilipschitz over a neighborhood $\de V$. This is our problem, $\mathrm{prob}$. If $\dim(V) = p = 3$, then by Theorem \ref{thm:extend1} $\mathrm{prob}$ is ssc to a solution of bilipschitz class. If $\dim(V) = p \geq 4$ then for $Q = \prod_{j=1}^J S^{i_j}$, $i_j \geq 1$, $\mathrm{prob} \times Q$ is ssc to a solution of bilipschitz class. The number of factors $J$ is a function of the problem data and can be enormous, but the choice of the factor sphere dimensions $i_j \geq 1$ is completely arbitrary.
\end{thm}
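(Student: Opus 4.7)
The plan splits by base dimension. For $p = 3$ the assertion is essentially Theorem~\ref{thm:extend1}, after a mild extension of that theorem's proof to cover disconnected structure groups. The discrepancy between $\homeo_0(X)$ and general $\operatorname{bilipschitz}(X)$ is encoded by a $\pi_0$-monodromy $\mu : \pi_1(V) \to \pi_0(\operatorname{bilipschitz}(X))$, which can be carried along equivariantly through the construction of Theorem~\ref{thm:extend1} since that construction is local in the base; alternatively one prefaces by an ssc that absorbs $\mu$ and then concatenates, using the observation recorded before Theorem~\ref{thm:extend1} that a concatenation of ssc's is an ssc.

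For $p \geq 4$ I would enter Meigniez's proof of MT rather than use it as a black box. Meigniez builds the cobordism $(W; V \times Q, V^{\ast})$ by induction over a handle decomposition of the base, extending a foliated Haefliger structure across handles one at a time. At each $k$-handle $h$ the obstruction to extension is killed by excising a small disk and gluing in a \emph{Meigniez piece} $P_h$ that supports the required foliation germ. By design $P_h$ contributes only a perfect extension of $\pi_1$, so the exact sequence \eqref{eq:sequence} is automatic; the enemy of the ssc condition is extraneous relative homology $H_{\ast}(W, V \times Q)$ introduced by the $P_h$'s.

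The role of $Q = \prod_{j=1}^{J} S^{i_j}$ is to supply cancellation partners. Each extraneous homology class coming from some $P_h$ is paired with a single sphere factor of $Q$, and a framed surgery involving that sphere kills the class while preserving perfectness of the kernel in~\eqref{eq:sequence}. Because cancellation is a standard framed surgery, the sphere dimensions $i_j \geq 1$ are immaterial, but $J$ must be at least the number of problematic Meigniez pieces, which depends on the handle decomposition of $V$ and the problem data and can be enormous. For $p = 3$ the Meigniez pieces embed directly into the 3-dimensional cobordism without such auxiliary cancellation, which is why Theorem~\ref{thm:extend1} requires no stabilization.

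The main obstacle is the explicit bookkeeping: matching each problematic $P_h$ with a sphere partner, and verifying at each step that (i) the kernel $P$ in $1 \to P \to \pi_1(V^{\ast}) \to \pi_1(V \times Q) \to 1$ remains perfect, and (ii) the Whitehead torsion $\mathrm{Wh}(W, V \times Q)$ vanishes so that $V \times Q \hookrightarrow W$ is a simple homotopy equivalence. Checking this compatibility between Meigniez's foliated construction and standard framed surgery is precisely where the proof ``looks inside'' Meigniez's argument, and is the hardest part of the plan.
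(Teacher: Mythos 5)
Your high-level plan is right — the paper does enter Meigniez's proof rather than using MT as a black box, and you correctly identify that the stabilization factors $J$ come from the proof's inductive structure and that the sphere dimensions are immaterial. But the \emph{mechanism} you assign to $Q$ is wrong, and this is the heart of the proof.

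You describe $Q$ as supplying "cancellation partners" so that extraneous homology introduced by Meigniez pieces can be killed by framed surgery, leaving the kernel perfect. That is not what happens, and it would not work: the paper's replacement blocks $\lbar{\mathcal{C}}_1$ are designed from the start (via Bing doubling and longitudinal sums of the homology-solid-torus model $C_1$) to add \emph{no} relative $\Z[\pi_1]$-homology, so there is nothing to cancel. The actual role of $Q$ is purely structural: Meigniez's fissures $\Sigma_i$ are codimension-$2$ submanifolds of $V$ whose components can initially be arranged to have the form $S^1 \times S^{p-3}$ or a torus, but after the first fissure $\Sigma_1$ is healed by replacing $\Sigma_1 \times D^2$ with $\mathcal{C}_1$, the remaining fissures $\Sigma_2', \dots, \Sigma_l'$ are "pulled through" by a degree-one map and lose any sphere factor. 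Since the replacement model $\mathcal{C}_1^k$ requires an $S^k$ factor to exist at all (it is a spun slice-complement with boundary $S^{k-2}\times S^1$), each subsequent step must first cross with a sphere $S^{k_j}$ to restore a sphere factor to the fissure's tubular neighborhood. That is why $J = l-1$ (one sphere per remaining fissure) and why the $i_j$'s are arbitrary. Framed surgery on spheres never enters.

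You also describe Meigniez's proof as an induction over a handle decomposition of the base, excising disks at each handle and gluing in pieces. That description is closer to the paper's proof of Theorem~\ref{thm:four} (the retraction-only statement), not to Meigniez's quasi-complementary foliation argument, which works with Haefliger structures on the bundle $\tau$ of vertical tangents, resolving non-transversality along a total fissure $\Sigma \subset B$. Correspondingly, your explanation of why $p=3$ needs no stabilization ("Meigniez pieces embed directly") misses the real reason: in dimension $3$ the fissures are circles and project to $V$ pairwise disjointly by general position, so no pull-through step ever destroys the sphere factor. Finally, for $p=3$ the paper does not patch Theorem~\ref{thm:extend1} with a $\pi_0$-monodromy argument as you propose; it simply notes that the Meigniez-based proof works at $p=3$ too, and is not confined to the identity component — indeed the paper's Note points out the structure groups in Theorems~\ref{thm:extend1} and~\ref{thm:extend2} differ, so one is not strictly a corollary of the other.
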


\begin{note}
    When Theorem \ref{thm:extend2} is restricted $p = \dim(V) = 3$ there is a subtle difference with Theorem \ref{thm:extend1}: the structure group has changed from $\homeo_0(X)$ to bilipschitz, since the latter is the lowest differentiability where the methods of \cite{mei} apply, and these methods are not confined to the identity component. So the structure group is, simultaneously, more and less restrictive than before.
\end{note}

One may hope that the stabilization in Theorem \ref{thm:extend2} for $p \geq 4$ is unnecesary, as we know no counterexamples. The next theorem solves the initial problem without stabilization but with less control on the cobordism $W$.

\begin{thm}\label{thm:four}
    Suppose $\dim(V) \geq 3$ and $B$ has structure group $\homeo(X)$ (or bilipschitz$(X)$) and that there is a $C^0$-transverse foliation near $\de V$. Then there is a \emph{solution}, a cobordism $(W;V, V^\ast)$ covered by bundles to a solution
    \begin{tikzpicture}[anchor=base, baseline]
        \node at (0,1.1) {$X$};
        \draw [->] (0.2,1.2) -- (0.8,1.2);
        \node at (1.1,1.1) {$B^\ast$};
        \draw [->] (1.1,1) -- (1.1,0.4);
        \node at (1.1,0) {$V^\ast$};
    \end{tikzpicture}
    with a topologically flat connection, \emph{and} with $W$ admitting a retraction $r: W \ra V$, $r \circ \mathrm{inc}_V = \id_V$.
\end{thm}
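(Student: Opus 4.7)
My plan is to treat Theorem \ref{thm:mt} as a black box for flattening, then enlarge the resulting cobordism by ambient topological surgery on its top boundary until the inclusion of $V$ admits a homotopy retraction, and finally repair flatness over the added material by a second black--box application of Theorem \ref{thm:mt}. Because no attempt is made to track Whitehead torsion or to bound the perfect kernel at the top end, this falls short of an ssc, giving exactly the ``retraction half'' that Theorem \ref{thm:four} advertises.

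Concretely, I would first apply Theorem \ref{thm:mt} to $B \ra V$ with its prescribed boundary foliation $\mathcal{F}_0$, obtaining a cobordism $(W_0;V,V_0^\ast)$, constant near $\de V$, covered by $(\lbar{B}_0;B,B_0^\ast)$ with $B_0^\ast$ topologically flat. The acyclicity of $B\homeo^\delta(X) \ra B\homeo(X)$ underlying MT already makes $V \hookrightarrow W_0$ a homology equivalence with all local coefficients, but a perfect normal subgroup of $\pi_1(V_0^\ast)$ generally sits in the kernel of $\pi_1(V_0^\ast) \ra \pi_1(W_0)$ and the relative groups $\pi_k(W_0,V)$ are uncontrolled, so $W_0$ need not itself retract. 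I would then do topological surgery on $W_0$, supported in a collar of $V_0^\ast$ and disjoint from $\de V$ so that neither $V$ nor $\mathcal{F}_0$ is touched: working up in $k$, attach $(k+1)$--handles along framed embedded $k$--spheres in the top boundary realising generators of $\pi_k$ of the pair. For $\dim W_0 \geq 5$ this is standard PL/topological surgery; for $\dim V = 3$ (so $\dim W_0 = 4$) one uses topological handles in the bilipschitz category, the regime of the present paper. The outcome is a cobordism $W_1$ with $\pi_\ast(W_1,V)=0$, whence $V \hookrightarrow W_1$ is a weak, and by Whitehead a homotopy, equivalence.

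The main obstacle is that the $2$--handle attachments in particular destroy flatness over the handle cores (higher handles attach along spheres with trivial $\pi_1$ and can be furnished with a trivial flat bundle, but a $2$--handle would require the holonomy around the attaching circle to be trivial in $\homeo(X)$, which it generally is not), and this flatness must be repaired without spoiling the retraction just obtained. I would invoke Theorem \ref{thm:mt} a second time, as a black box, to the bundle over $W_1$ with the flat germ already fixed on $\de V$ and on the unmodified part of $V_0^\ast$; this produces $(W_2;\de_+W_1,V^\ast)$, constant near the given flat region, with a topologically flat connection extending the existing data on $V^\ast$. Set $W \coloneqq W_1 \cup_{\de_+W_1} W_2$. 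The retraction is built in two pieces: on $W_1$, use the homotopy inverse of $V \simeq W_1$ provided by Step 2; on $W_2$, extend across by arranging the second MT application so that $W_2$ deformation retracts onto $\de_+W_1$, obtainable either by compressing the homotopy in $B\homeo(X)$ realising MT toward the existing flat region, or by iterating the surgery and MT steps finitely many times until both conditions hold simultaneously. Composition yields $r:W \ra V$ with $r \circ \mathrm{inc}_V \simeq \id_V$, as required. The most delicate point, to which I would devote the bulk of the argument, is precisely this compatibility between the ambient surgery carrying the retraction and the black--box MT carrying flatness.
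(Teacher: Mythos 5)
Your proposal aims to make $V \hookrightarrow W_1$ a full homotopy equivalence by killing $\pi_k(W_1,V)$ via surgery on the top end, and then to repair the resulting loss of flatness by a second black-box invocation of Mather--Thurston. The difficulty you flag as the delicate point is exactly where the argument breaks, and neither of your two suggested repairs is substantiated. The second MT application produces a cobordism $W_2$ over the surgered top $\de_+W_1$; for $W = W_1 \cup W_2$ to retract to $V$ you need the retraction of $W_1$ to extend over $W_2$, which in practice requires $W_2$ itself to retract to its bottom end. But that is a statement of the same kind as Theorem~\ref{thm:four}, so the step is circular. Concretely: ``compressing'' the homotopy realizing MT toward the flat region is not an operation available from the abstract acyclicity statement (\ref{homeomap}) --- obtaining precisely that kind of control is the whole point of this paper --- and ``iterating surgery and MT until both conditions hold'' has no termination mechanism, since each fresh MT application generically introduces new uncontrolled topology at the top end that would again need surgery. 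You have also overshot the target: a homotopy equivalence $V \simeq W$ is the ssc statement of Theorems~\ref{thm:extend1}/\ref{thm:extend2}, which require the models $\mathcal{C}_1,\lbar{\mathcal{C}}_1$ and, above dimension $3$, stabilization; Theorem~\ref{thm:four} only asks for a retraction.

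The paper's proof avoids the circularity by never working globally. It fixes a relative handle decomposition $\mathcal{H}$ of $V$ and runs an induction over handle index. Over a $k$-handle $h_k \cong (D^k,\de D^k)\times D^{p-k}$, relative MT applied to the partial solution already built on lower-index handles produces a bordism to a flat solution over some relative $k$-manifold $(M^k,\de)$. The key observation is that the core pair $(D^k,\de D^k)$ is terminal under degree-one maps rel boundary: every such $(M^k,\de)$ admits a degree-one map back to $(D^k,\de D^k)$. The retraction $r:W\ra V$ is then assembled directly as a union/composition of these universal degree-one maps to the handle cores, with no surgery on the cobordism, no attempt at a homotopy equivalence, and no iteration that must be shown to converge. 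This is a genuinely different and more economical route: the retraction is produced intrinsically by the geometry of each handle at the same time flatness is produced, rather than being imposed afterward and then renegotiated with MT.
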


\begin{note}
    A consequence of the retraction $r$ is the existence of the degree 1 map $r \circ \mathrm{inc}_{V^\ast}: V^\ast \ra V$. Degree one maps induce a well studied partial order on manifolds, e.g.\ the cohomology of the target always injects into the source, and $\pi_1(V^\ast) \ra \pi_1(V)$ is surjective. So as with a ssc, $V$ is the ``simple'' and $V^\ast$ the ``complex'' end. Also as with ssc, finite composition of retracting cobordisms are also retracting.
\end{note}

The next theorem provides, in the Lie context, a ssc from problem to solution, whenever the homological obstruction vanishes.\footnote{Regarding this obstruction, we thank the referee for pointing out that for $G$ either compact or complex semi-simple the map $H_\ast(BG^\delta;Q) \ra H_\ast(BG;Q)$ is trivial, see\cite{mil83}. So in these cases, the analogous rational obstruction automatically vanishes.}

\begin{thm}\label{thm:refine1}
    Let $G$ be a simply connected semi-simple Lie group with Lie algebra $\mathfrak{g}$. Let $(V,\Sigma)$ be a compact 3-manifold with boundary $\Sigma$, and $B$ a principal $G$-bundle with a fixed flat $\mathfrak{g}$-connection $A$ over $\Sigma$ which is trivial in $H_2(G^\delta,\Z)$ i.e.\ bounds a flat principal $G$-bundle over some $V^\pr$, $\de V^\pr = \Sigma$. Then there is a ssc $(W,V,V^\ast)$ and an extension $\lbar{B}$ of $B$ over $W$ with $B^\ast$ over $V^\ast$ possessing a flat $\mathfrak{g}$-connection extending $A$ on $\Sigma$.
\end{thm}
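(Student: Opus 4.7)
The plan is to exploit the given bounding 3-manifold $V'$ by gluing $V$ to $V'$ along $\Sigma$, applying the ssc-refined Mather--Thurston argument of Theorem~\ref{thm:extend1} to this closed 3-manifold relative to the already-flat half, and then cutting the result back to obtain an ssc from $V$ alone.

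First, use the hypothesis to fix $V'$ with $\de V'=\Sigma$ and a flat principal $G$-bundle $B'$ over $V'$ restricting to $(B|_\Sigma,A)$. Form the closed 3-manifold $\hat V = V\cup_\Sigma(-V')$ and the glued principal $G$-bundle $\hat B = B\cup_A B'$ over $\hat V$. On $V'\subset\hat V$ the bundle $\hat B$ already carries the flat $\mathfrak g$-connection $A'$ which restricts to $A$ on $\Sigma$; on $V\subset\hat V$ it is merely a principal $G$-bundle (necessarily trivial, since $BG$ is 3-connected for simply connected semi-simple $G$, so every $G$-bundle on a 3-manifold is trivializable) carrying no given flat structure. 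The task becomes flattening $\hat B$ on a controlled modification of $\hat V$, rel the already flat $V'$.

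Second, apply a rel-$V'$ Lie-category version of Theorem~\ref{thm:extend1} to produce a ssc $(\hat W;\hat V,\hat V^*)$ that restricts to the product $V'\times[0,1]$ along $V'$, together with an extension $\bar{\hat B}$ of $\hat B$ over $\hat W$, such that $\hat B^*\coloneqq\bar{\hat B}|_{\hat V^*}$ carries a flat $\mathfrak g$-connection agreeing with $A'$ on $V'\subset\hat V^*$. The scheme follows the proof of Theorem~\ref{thm:extend1} (which in turn invokes Meigniez), but run inside the category of principal $G$-bundles with flat Lie connections rather than fiber bundles with $C^0$-foliations. This is exactly where the homological hypothesis is spent: the obstruction to lifting the classifying map $\hat V\to BG$ through $BG^\delta$, rel the lift already present on $V'$, is the class $[A]\in H_2(BG^\delta;\mathbb Z)$, and this class vanishes by assumption, so the lift (and hence the flat extension) can be realized after a suitable controlled cobordism.

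Third, cut along the product collar. Inside $\hat W$ the frontier of the product region $V'\times[0,1]$ is $\Sigma\times[0,1]$. Cut $\hat W$ along this frontier and keep the piece not containing $V'$; call the remainder $W$ after smoothing corners. Then $\de W = V\cup_\Sigma(-V^*)$ where $V^*\coloneqq\hat V^*\setminus V'^\circ$ has $\de V^*=\Sigma$, and $\bar B\coloneqq\bar{\hat B}|_W$ extends $B$ over $W$ with $B^*\coloneqq\bar B|_{V^*}$ inheriting the flat $\mathfrak g$-connection, which still matches $A$ on $\Sigma$ because $A$ and $A'$ agreed there. Choosing the simple deformation retraction $\hat r:\hat W\to\hat V$ to project $V'\times[0,1]$ to $V'$ along the $[0,1]$-factor, the restriction $r\coloneqq\hat r|_W:W\to V$ is itself a simple deformation retraction, so $V\hookrightarrow W$ is a simple homotopy equivalence and $(W;V,V^*)$ is the desired ssc. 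The main obstacle is the second step: setting up a Lie-refined, rel-$V'$ variant of Theorem~\ref{thm:extend1}. Theorems~\ref{thm:extend1} and \ref{thm:extend2} in their stated form output only bilipschitz-flat (not Lie-flat) structures, so one must re-examine Meigniez's construction inside the principal-$G$-bundle/Lie-connection category and arrange for the already-flat collar $V'\times[0,1]$ to be preserved throughout. The homological hypothesis furnishes $V'$ itself as the flat bordism certificate that makes this refinement possible, converting what would otherwise be a bilipschitz-flat output into a genuine flat $\mathfrak g$-connection extending $A$.
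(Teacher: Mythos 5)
Your strategy of gluing $V\cup_\Sigma(-V')$ and then appealing to a ``Lie-refined, rel-$V'$ variant of Theorem~\ref{thm:extend1}'' has a genuine gap at exactly the step you flag as the ``main obstacle'': no such variant exists, and the reason is structural rather than technical. The Mather--Thurston acyclicity of $B\homeo^\delta(X)\to B\homeo(X)$, and all of Meigniez's machinery, depend essentially on the flexibility of infinite-dimensional transformation groups (fragmentation, simplicity of $\homeo_0$, unlimited holonomy for fissures). For a finite-dimensional Lie group $G$, the analogous map $BG^\delta\to BG$ is \emph{not} a homology equivalence --- indeed, the hypothesis of Theorem~\ref{thm:refine1} concerning $H_2(G^\delta;\Z)$ is precisely an acknowledgment that one cannot flatten $G$-bundles at will. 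So ``re-examining Meigniez's construction inside the principal-$G$-bundle category'' cannot succeed; the key construction in the proof of Theorem~\ref{thm:extend1} is the faithful bilipschitz action of $\widetilde{\Delta(2,3,7)}\subset\widetilde{\operatorname{PSL}(2,\R)}$ on a disk, and Thurston stability rules out even a $C^1$ analogue, let alone a Lie analogue. Your Step 2 silently assumes the very thing that makes the theorem nontrivial.

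The paper's actual argument takes the opposite geometric route and spends the homological hypothesis in a different way. It does not close up $V$; instead it observes that the given $V'$ is related to $V$ by surgery on some framed link $L'\subset V'$, and replaces the standard surgery solid tori by the homology solid tori $\mathcal{C}_2$ built from the $(2,3,5)$-Seifert model, whose $\pi_1$ is the binary icosahedral group $BI$. The point of choosing $C_2$ rather than $C_1$ is that $BI$ is finite and embeds in $\operatorname{SU}(2)$, which in turn embeds (diagonally across the simple factors) in any simply connected semi-simple $G$ so as to normally generate; combined with the perfectness of $G$ (supplying the commutator decompositions that drive the ramified Bing-double/longitudinal-sum constructions), this lets the holonomy of $A$ around the surgery curves be extended over $\pi_1(\mathcal{C}_2)\to G$. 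The ssc $(W;V,V^*)$ is then built exactly as in Theorem~\ref{thm:extend1} from the $\overline{\mathcal{C}}_2$ blocks, and the bundle over $W$ is backfilled at the end by obstruction theory using $\pi_1(G)=\pi_2(G)=0$ in the relevant range. So the simplicity/commutator arguments happen inside $G$ itself, not inside $\homeo_0(X)$ --- that is the substitution that replaces the unavailable Lie Mather--Thurston step, and it is the part your proposal leaves unaddressed.

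One smaller point: your claim that the rel-$V'$ lifting obstruction is ``the class $[A]\in H_2(BG^\delta;\Z)$'' conflates the hypothesis with the would-be proof. The hypothesis is used by the paper only to \emph{produce} $V'$; it is not reinterpreted as the vanishing of an obstruction class inside a Meigniez-style induction.
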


\begin{ext}
    Theorem \ref{thm:refine1} is a special case of what we actually will prove. It is not necessary that the bundle $B$ be a $G$-principal bundle with flat $\mathfrak{g}$-connection near the boundary. Instead all that is required is that the fiber $X$ be a manifold with a point set (not necessarily Riemannian) metric and the monodromy of the topologically flat connection lie in the identity component of the group of isometries $I_0(X) \subset \operatorname{bilipschitz}_0(X)$. By classical theorems of Montgomery and Zippin, $I_0(X)$ is a Lie group and that forms the bridge. Call the extension \textbf{Theorem \ref{thm:refine1}\textprime}.
\end{ext}

The last theorem (\textbf{\ref{thm:refine2}}) quantifies the Milnor-Wood inequality \cites{mil66,wood} for circle bundles over surfaces, by considering the trade-off between adding lots of genus to the base and keeping the transition functions close to $\mathrm{U}(1) \subset \operatorname{PSL}(2,\R)$, and keeping the genus lower and allowing transition function (like large boosts from special relativity) which are quite far from rotations. We can keep the transition function within the Lie group $\operatorname{PSL}(2,\R) \subset \homeo_0(S^1)$, the embedding via the usual representation on the hyperbolic disk model.

Consider the problem of imposing a $\operatorname{sl}(2,\R)$ flat connection on a circle bundle $B$ with Euler class $\chi(B)$ over a base surface $\Sigma_g$ of genus $g$. Let $g_{\chi(B)}(\epsilon)$ be the smallest genus so that there is a generating\footnote{All generating sets are assumed to be ``symmetric'' i.e.\ closed under inverse.} set $S$ for $\pi_1(\Sigma_g)$ so that $B$ admits a topologically flat connection with representation $\rho: \pi_1(\Sigma_g) \ra \operatorname{PSL}(2,\R)$ so that $\rho(S) \subset \EuScript{N}_\epsilon(\mathrm{U}(1)) \subset \operatorname{PSL}(2,\R)$, where the $\epsilon$-neighborhood is defined used the sup norm. Define $\lbar{g}_{\chi(B)}^\pr(\epsilon)$ similarly by replacing $\operatorname{PSL}(2,\R)$ with $\homeo_0(S^1)$. Clearly $\lbar{g}_{\chi(B)}^\pr(\epsilon) \leq g_{\chi(B)}(\epsilon)$. It seems reasonable in light of \cite{wood} to guess that they are actually equal.

\begin{thm}\label{thm:refine2}
    The function $g_{\chi(B)}(\epsilon)$, for $\epsilon > 0$, is defined into the natural numbers union 0, $\N^+$. For $\epsilon$ sufficiently small it obeys the upper bound:
    \[
        g_{\chi(B)} \leq \frac{2\pi \abs{\chi(B)}}{\epsilon^2 - O(\epsilon^3)}
    \]
    where the error term satisfies $\abs{\frac{O(\epsilon^3)}{\epsilon^3}} < \mathrm{const.}$, some $\mathrm{const.} > 0$.
\end{thm}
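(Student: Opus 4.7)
The proof is constructive: given $n = \chi(B)$ and small $\epsilon > 0$, I will build a representation $\rho \colon \pi_1(\Sigma_g) \to \mathrm{PSL}(2,\R)$ of Euler number $n$ with each generator lying in $\EuScript{N}_\epsilon(\mathrm{U}(1))$, for $g$ of the size asserted by the bound; the flat circle bundle associated with $\rho$ acting on $S^1 = \de \mathbb{H}^2$ then realizes $\chi(B)$. To do so I use the Cartan decomposition $\mathfrak{sl}(2,\R) = \mathfrak{k} \oplus \mathfrak{p}$, where $\mathfrak{k} = \R K$ generates $\mathrm{U}(1)$ and $\mathfrak{p}$ is $2$-dimensional, satisfying $[\mathfrak{k},\mathfrak{p}] \subset \mathfrak{p}$ and $[\mathfrak{p},\mathfrak{p}] \subset \mathfrak{k}$: every element in a neighborhood of $\mathrm{U}(1)$ writes uniquely as $k \exp(X)$ with $k \in \mathrm{U}(1)$, $X \in \mathfrak{p}$, and I normalize the Lie algebra so that the sup-norm distance $d(k \exp X, \mathrm{U}(1))$ on $\mathrm{PSL}(2,\R) \subset \homeo^+(S^1)$ equals $\|X\|$ to leading order.

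The central computation is the rotation number of a commutator. For $a = k_a \exp(X_a)$, $b = k_b \exp(X_b)$, abelianness of $\mathrm{U}(1)$ cancels the purely compact part of $[a,b]$, and Baker--Campbell--Hausdorff gives $\log [a,b] = [X_a', X_b'] + (\text{$\mathfrak{p}$-terms of order }\epsilon) + O(\epsilon^3)$, where $X_a', X_b'$ are $\mathrm{Ad}(k)$-conjugates of $X_a, X_b$ and the leading bracket $[X_a', X_b'] \in \mathfrak{k}$ records rotation. Because $\dim \mathfrak{p} = 2$ and $[\mathfrak{p},\mathfrak{p}]$ is $1$-dimensional, this bracket's magnitude is maximized exactly when $X_a \perp X_b$ in $\mathfrak{p}$ with norm $\epsilon$, giving a commutator whose lift to $\widetilde{\mathrm{PSL}(2,\R)}$ has translation number attaining $\tau([\tld{a},\tld{b}]) = \frac{\epsilon^2 - O(\epsilon^3)}{2\pi}$; the factor $2\pi$ comes from the convention $\tau(z) = 1$ for the central generator $z \in \widetilde{\mathrm{PSL}(2,\R)}$.

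To assemble the representation, set $g = \left\lceil \frac{2\pi n}{\epsilon^2 - O(\epsilon^3)} \right\rceil$ and choose $R \in \mathrm{U}(1)$ with $\tau(\tld{R}) = n/g$ exactly; then $R^g$ lifts to $z^n$ and therefore equals the identity in $\mathrm{PSL}(2,\R)$. Since $n/g \leq \frac{\epsilon^2 - O(\epsilon^3)}{2\pi}$, an implicit function argument applied to the commutator map $(X_a, X_b) \mapsto [\exp X_a, \exp X_b]$, which is non-degenerate on its $\mathfrak{k}$-component by the bracket analysis above, produces $a, b \in \EuScript{N}_\epsilon(\mathrm{U}(1))$ with $[a,b] = R$ exactly. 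Setting $\rho(a_i) = a$, $\rho(b_i) = b$ for $i = 1,\ldots,g$ makes the surface relation $\prod_i [a_i, b_i] = R^g = 1$ hold, each generator lives in $\EuScript{N}_\epsilon(\mathrm{U}(1))$ by construction, and the Euler number of $\rho$ computed in the universal cover is $\tau(\prod_i [\tld{a},\tld{b}]) = \tau(\tld{R}^g) = n$ as required.

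The hardest part will be pinning down the precise constant $2\pi$: this requires tracking three normalizations---the sup norm on $\mathrm{PSL}(2,\R) \subset \homeo^+(S^1)$, the inner product on $\mathfrak{sl}(2,\R)$ governing the bracket $[\mathfrak{p},\mathfrak{p}] \to \mathfrak{k}$, and the translation number on $\widetilde{\mathrm{PSL}(2,\R)}$---so that all three agree on what it means for an element to be $\epsilon$-close to $\mathrm{U}(1)$ and for a rotation to carry Euler contribution $n/g$. A secondary technical point is solving $[a,b] = R$ \emph{exactly} (not merely to leading order) with $a, b$ confined to the $\epsilon$-neighborhood: this is where the $O(\epsilon^3)$ denominator correction enters, absorbing the higher-order BCH corrections via the implicit function theorem on the commutator map.
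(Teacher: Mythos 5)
Your proposal takes essentially the same route as the paper: exponentiate $\mathfrak{p}$-directions (the paper's $E,F$) by $\epsilon$, so that Baker--Campbell--Hausdorff makes each commutator contribute a rotation of size $\approx\epsilon^2$, hence $g\approx 2\pi|\chi(B)|/\epsilon^2$ commutators realize the prescribed Euler number via the Milnor--Wood lift to $\widetilde{\operatorname{PSL}(2,\R)}$. The paper offers three alternative ways to absorb the $O(\epsilon^3)$ error (conjugating into a nearby elliptic one-parameter subgroup; patching the accumulated deviation with $O(\epsilon^{-2})$ extra genera via Wood's single-commutator estimate; and an inverse-function-theorem / $2$-jet perturbation), and your ``implicit function argument'' is the third of these. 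The one place your sketch leaves a real gap is precisely where you flag difficulty: the commutator map $(X_a,X_b)\mapsto[\exp X_a,\exp X_b]$ has \emph{vanishing} differential at the identity, and its $2$-jet $(X_a,X_b)\mapsto[X_a,X_b]_{\mathrm{Lie}}$ only reaches $\mathfrak{k}$; forcing $[a,b]$ to lie exactly in $\mathrm{U}(1)$ also requires killing the $\mathfrak{p}$-component of $\log[a,b]$, which means you must additionally vary the $\mathfrak{k}$-parts of $a,b$. The paper's third method addresses this explicitly by computing the full bracket $[rH+sE+tF,\;r'H+s'E+t'F]$ and observing that second-order variations sweep out all of $\mathfrak{sl}(2,\R)$, which is what makes the perturbation / Cauchy-sequence argument close; without that remark the naive implicit function theorem does not apply.
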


One might ask for a rather strong converse of the form:

For sufficiently small $\epsilon > 0$, $g_{\chi(B)} \geq \frac{c \abs{\chi(B)}}{\epsilon^2 + O(\epsilon^3)}$, for some $c > 0$, perhaps with $c = 2\pi$. But we have only been able to prove\footnote{The proof: The standard surface relator has length $4g$ and is a composition of $4g$ elements of $\operatorname{Homeo}^+(S^1)$, each represents to an element $e_i$ within $\epsilon$ of some rotation $r_i$, $1 \leq i \leq 4g$. Estimating rotation numbers as in \cite{wood}, the rotation number of the composition of the $e_i$ is within $4g\epsilon$ of the rotation number of the composition of $r_i$, which is trivial. The former rotation number is in the Euler class.} such a statement where the denominator is $\epsilon$ (not $\epsilon^2$), and $S$ is restricted to the standard generators for $\pi_1(\Sigma_g)$. So this question is open.

Conjecture \ref{conj:gens}, below, proposes a vast generalization of Theorem \ref{thm:refine2} to higher dimensions and with the additional feature that when $\dim(\text{base}) \geq 3$, the topology of the solution can be controlled up to ssc. In theorems \ref{thm:extend1} and \ref{thm:extend2} we have already seen that it is sometimes possible to ``hide'' the extensive genus inside a perfect group, suggesting the conjecture may be true.

Before stating broad (and optimistic) conjectures, we give the simplest example of what we would like to know, but don't.

\begin{ex}
    Recall the 't Hooft instanton, also known as a generalized Hopf fibration
    \begin{tikzpicture}[anchor=base, baseline]
        \node at (0,1.1) {$S^3$};
        \draw [->] (0.2,1.2) -- (0.8,1.2);
        \node at (1.1,1.1) {$S^7$};
        \draw [->] (1.1,1) -- (1.1,0.4);
        \node at (1.1,0) {$S^4$};
    \end{tikzpicture}
    with structure group $\operatorname{SU}(2)$. We ask if for every sup norm neighborhood $\EuScript{N}(\operatorname{SU}(2)) \subset \operatorname{bilipschitz}_0(S^3)$. There is a homology 4-sphere $H$ so that the pull back bundle under the degree one map $H \ra S^4$,
    \begin{tikzpicture}[anchor=base, baseline]
        \node at (0,1.1) {$S^3$};
        \draw [->] (0.2,1.2) -- (0.8,1.2);
        \node at (1.1,1.1) {$E$};
        \draw [->] (1.1,1) -- (1.1,0.4);
        \node at (1.1,0) {$H$};
    \end{tikzpicture}
    , admits a topologically flat connection given by $\rho: \pi_1(H) \ra \operatorname{bilipschitz}_0(S^3)$ so that for some generating set $S$ of $\pi_1(H)$, $\rho(S) \subset \EuScript{N}(\operatorname{SU}(2))$. This seems to require a new idea, even without the condition on the generating set.
\end{ex}

\begin{conj}
    Theorem \ref{thm:extend2} holds without stabilization.
\end{conj}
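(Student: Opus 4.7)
\medskip

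\noindent \emph{Proof proposal.} My plan is to first try to reduce to the stabilized statement of Theorem \ref{thm:extend2} by post-surgery, and, failing that, to re-enter the Meigniez proof. The naive attempt is: apply Theorem \ref{thm:extend2} to $\mathrm{prob}$ using a single very high-dimensional factor $Q = S^k$, obtaining a ssc $(W_0; V \times S^k, Y^*)$ with bilipschitz flat solution on $Y^*$, and then try to excise the $S^k$-factor from the input boundary. This runs into an immediate difficulty: there is no manifold cobordism from $V \times S^k$ to $V$ having $V$ as the simple end of a ssc (a mapping-cylinder construction yields $V \times D^{k+1}$, in which $V$ does not appear as a boundary component). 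One would therefore have to surger $W_0$ along embedded $(k{+}1)$-disks representing the $S^k$-factors, but the obstruction to doing this while preserving flatness of $Y^*$ is exactly the sort that the stabilization was introduced to manage, so this direct reduction is unlikely to succeed without additional input.

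\medskip

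\noindent A more hopeful plan is to revisit Meigniez's construction from inside the proof of Theorem \ref{thm:extend2}. The role of the stabilizing factor $Q$ there is to provide geometric room for resolving obstructions to integrating Haefliger structures into genuine foliations; by (\ref{eq:homotopyfiber}) and Mather-Thurston, these obstructions may be viewed as $\pi_\ast$-classes of a classifying map into $\lbar{\Gamma}^0_d(X)$. The conjectural assertion is that such classes can instead be absorbed into the perfect extension $P$ of the sequence (\ref{eq:sequence}), in direct analogy with how Theorem \ref{thm:extend1} manages the 3-dimensional case by enriching $\pi_1(V^*)$. I would therefore attempt, for each obstruction class appearing in Meigniez's induction, a surgery on $V^*$ which attaches a 2-3 handle pair that both realizes the required perfect extension (in the spirit of Quillen's plus construction) and carries a compatible foliated $X$-bundle extension over its cores.

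\medskip

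\noindent The main obstacle is precisely this last compatibility. In dimension 3, all perfect-extension surgeries are along circles, and the flat bundle over a handle core is controlled by its monodromy representation---a single conjugacy class. In dimensions $\geq 4$, the relevant perfect-extension handle cores become higher-dimensional, and one must extend a partially-defined classifying map into $\lbar{\Gamma}^0_d(X)$ over a disk in a way compatible with the flat structure already present on the rest of $V^*$. Although (\ref{eq:homotopyfiber}) provides the raw homotopy-theoretic extension, ensuring genuine foliation-theoretic compatibility across the handle attachment---so that the extension really glues up as a $C^0$-transverse foliation, and the Whitehead torsion of $W$ stays controlled relative to $V$---is delicate, and I expect this is the crux. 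A successful attack will likely require either new structural input on $\lbar{\Gamma}^0_d(X)$ in the bilipschitz category, or a coupled surgery procedure that manipulates $V^*$, its bundle, and its flat connection simultaneously, possibly leveraging the $C^1$-category flexibility furnished by Tsuboi's extension of Mather-Thurston.
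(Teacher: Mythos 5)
This statement is a \emph{conjecture} in the paper, not a theorem; the paper offers no proof, and you correctly refrain from claiming one. Your proposal is therefore best read as a diagnosis of the obstacle, and on that score it is partly on target but misses where the paper itself locates the bottleneck. Your first observation — that one cannot simply post-surger away the stabilizing $S^k$-factor while preserving flatness on the solution end — is sound and worth having on the record. Your second plan, re-entering Meigniez's induction and trying to absorb obstructions into the perfect kernel $P$ of (\ref{eq:sequence}) via a plus-construction-style 2--3 handle pair, is indeed in the spirit of how Theorems \ref{thm:extend1} and \ref{thm:extend2} are proved; but the way you phrase the crux (extending a classifying map into $\lbar{\Gamma}^0_d(X)$ over higher handle cores compatibly with the existing flat structure) is a more homotopy-theoretic abstraction than the concrete difficulty the paper points to.

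The paper identifies the bottleneck quite precisely in the proof of Theorem \ref{thm:extend2}: the replacement models $\mathcal{C}_1^k$ / $\lbar{\mathcal{C}}_1^k$ that the construction knows how to build \emph{require the fissure to have a sphere factor}, i.e.\ $\pi\Sigma_i \cong M \times S^k$. After healing the first fissure $\Sigma_1$, the remaining fissures $\Sigma_j^\prime$ are ``pulled through'' the replacement $\mathcal{C}_1$ by a transversality argument, and in doing so they lose any sphere-factor structure — their topology becomes uncontrolled. The stabilization by $Q = \prod S^{i_j}$ is introduced for exactly one reason: to reinstate a sphere factor on $\Sigma_j^\prime \times S^{k}$ so the available models can be applied at the next inductive stage. (In dimension $p=3$ the issue does not arise because circles are already disjoint in general position and are already of the required form.) So the concrete open problem the paper leaves is not primarily about compatibility of foliated extensions in the abstract, but about manufacturing ssc homology-replacement models $(\lbar{\mathcal{C}}, \mathcal{C})$ carrying suitable $\operatorname{bilipschitz}_0(X)$-representations whose boundary is $\Sigma^\prime \times S^1$ for an \emph{arbitrary} $(p-2)$-manifold $\Sigma^\prime$, rather than one with a pre-supplied sphere factor. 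If you want to attack the conjecture, that is the specific construction to aim for; your framing via $\lbar{\Gamma}^0_d(X)$ and Whitehead torsion control, while not wrong, is one level removed from it.
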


\begin{conj}\label{conj:gens}
    Let $(V, \de V)$ be a manifold of $\dim(V) \geq 3$. Let
    \begin{tikzpicture}[anchor=base, baseline]
        \node at (0,1.1) {$X$};
        \draw [->] (0.2,1.2) -- (0.8,1.2);
        \node at (1.1,1.1) {$B$};
        \draw [->] (1.1,1) -- (1.1,0.4);
        \node at (1.1,0) {$V$};
    \end{tikzpicture}
    be a bundle with structure group $\homeo(X)$, where $X$ is any manifold with a (point set) metric, and the bundle possessing near $\de V$ a topologically flat connection $\mathcal{F}_0$ with holonomy lying in $I(X)$, the group of isometries of the fiber, and let $\EuScript{N}$ be any norm-topology neighborhood of $I(X)$ in $\homeo(X)$. Then there exists a ssc $(W; V,V^\ast)$, constant near $\de V$, covered by a bundle $\lbar{B}$ with structure group $\homeo_0(X)$ to a bundle
    \begin{tikzpicture}[anchor=base, baseline]
        \node at (1.1,1.1) {$B^\ast$};
        \draw [->] (1,1) -- (1,0.4);
        \node at (1.1,0) {$V^\ast$};
    \end{tikzpicture}
    possessing a topologically flat connection inducing a representation $\rho: \pi_1(V^\ast) \ra \homeo(X)$, with the property that $\rho(S) \subset \EuScript{N}$ for some generating set $S$ for $\pi_1(V^\ast)$.
\end{conj}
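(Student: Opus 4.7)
The plan is to build the desired ssc by first flattening via Theorem \ref{thm:extend2} and then subdividing monodromies in the spirit of Theorem \ref{thm:refine2}, hiding the subdivision cost inside a second ssc via the Quillen plus construction. Stage 1 (\emph{flattening}): apply Theorem \ref{thm:extend2}, stabilizing $V$ by a product of spheres if $\dim V \geq 4$, to produce a ssc $(W_1;V,V_1)$ covered by a bilipschitz bundle $\lbar{B}_1$, with $B_1\to V_1$ carrying a topologically flat connection $\mathcal{F}_1$ extending the germ at $\de V$. Let $\rho_1:\pi_1(V_1)\to\homeo_0(X)$ be the resulting holonomy. Stage 2 (\emph{subdivision}): build a second ssc $(W_2;V_1,V^\ast)$, covered by a $\homeo_0(X)$-bundle restricting to $B_1$ on $V_1$ and to a flat bundle $B^\ast$ on $V^\ast$ whose holonomy $\rho^\ast$ satisfies the $\mathcal{N}$-generator property. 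The composite $W = W_1\cup_{V_1}W_2$ is then a ssc by the concatenation property recorded in the introduction.

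For Stage 2 the key is that $\mathcal{N}$ is a norm-neighborhood of $I(X)\ni\id$ in $\homeo_0(X)$ and $\homeo_0(X)$ is generated by arbitrarily-small-$\id$-neighborhoods (by path-connectedness); hence every $g\in\homeo_0(X)$ admits a commutator factorization $g=\prod_{j=1}^n[a_j,b_j]$ with each $a_j,b_j\in\mathcal{N}$, where $n$ is controlled by the inverse square of the scale of $\mathcal{N}$ in the spirit of the Milnor-Wood upper bound in Theorem \ref{thm:refine2}. For a finite generating set $\{g_i\}$ of $\pi_1(V_1)$ represented by disjoint embedded loops $\gamma_i$ away from $\de V$, I would attach to $V_1\times[0,1]$ along $V_1\times\{1\}$ a trace which introduces $2n_i$ new generators $\alpha_{i,j},\beta_{i,j}$ per $\gamma_i$ and imposes the surface relator $\prod_{j=1}^{n_i}[\alpha_{i,j},\beta_{i,j}]=g_i$; geometrically this is the gluing of a thickened genus-$n_i$ surface along a tubular neighborhood of each $\gamma_i$. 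The new holonomy sends each $\alpha_{i,j},\beta_{i,j}$ to the prescribed $a_j,b_j\in\mathcal{N}$, and the surface relators hold by the commutator factorizations of $\rho_1(g_i)$. To make the trace a ssc, add paired $2$- and $3$-handles \`a la Quillen to absorb the homological excess of the surface attachments while leaving homotopy unaffected; the added $3$-handles have simply-connected attaching spheres, so the flat bundle extends trivially across them.

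There are two main obstacles. First, the ssc structure of $W_2$ forces the underlying bundle $B^\ast$ to be the pullback of $B_1$ via $V^\ast\hookrightarrow W_2\simeq V_1$, so $\rho^\ast$ must realize a flat structure on this specific bundle; multiple flat structures on a fixed bundle are parametrized by (homotopy classes of) maps to the homotopy fiber of $B\homeo^\delta_0(X)\to B\homeo_0(X)$, which has trivial singular homology by Mather-Thurston but generally non-trivial homotopy, and one must argue this fiber is rich enough to support the specific $\rho^\ast$ obtained by the commutator substitution---possibly by combining the subdivision with a small interior Meigniez-style MT cobording of each glued piece, thereby enlarging the freedom in $\rho^\ast$. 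Second, the commutator factorization itself must be established in the required form: for $X=S^1$ this is essentially Theorem \ref{thm:refine2}, or its conjectured $\homeo^+(S^1)$ extension $\lbar g_{\chi(B)}^\pr$ mentioned in the text; for general $X$ it requires working inside the Lie subgroup $I(X)\subset\homeo_0(X)$ (a Lie group by Montgomery-Zippin, as exploited in Theorem \ref{thm:refine1}\textprime) and bootstrapping from commutator identities in $I(X)$ to the ambient $\mathcal{N}$. Resolving both of these, likely in tandem, is the central challenge of this strategy.
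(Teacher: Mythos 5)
This statement is a \emph{conjecture} in the paper (Conjecture~\ref{conj:gens}), not a theorem: the author explicitly presents it as open and, in the `t~Hooft instanton example just preceding it, says that even a special case ``seems to require a new idea, even without...the condition on the generating set.'' There is therefore no proof in the paper to compare against. Your proposal is a strategy sketch, and you yourself flag two serious obstacles at the end; I will add a few more that I think are decisive.

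First, Stage~1 appeals to Theorem~\ref{thm:extend2}, which for $\dim V\ge 4$ only produces a ssc from $V\times Q$ (a stabilized base), not from $V$. The conjecture asks for a ssc $(W;V,V^\ast)$ from $V$ itself, so from the outset the composite you build does not have the required left end unless the stabilization in Theorem~\ref{thm:extend2} is removed --- which is itself Conjecture~1 of the paper. Second, the claim that every $g\in\homeo_0(X)$ factors as $\prod_j[a_j,b_j]$ with all $a_j,b_j\in\mathcal{N}$ is not supplied by Proposition~\ref{prop:x_conjugators} or its Extension: the dynamics section produces factorizations where the small elements are $h$ and the conjugators, but the outer commutator entries $a_i,b_i$ come from fragmentation and are not controlled in sup-norm. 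Establishing quantitative bounded generation by small elements in $\homeo_0(X)$ is precisely the missing ingredient, and it is unclear that the paper's $\widetilde{\operatorname{PSL}(2,\R)}$-type models can deliver it, since the faithful $\pi_1(P_1)$-action necessarily has large holonomies. Third, Stage~2's surgery to restore the ssc property is underspecified in a way that matters: you acknowledge the $3$-handles are harmless, but the paired $2$-handles must be attached along loops whose $\rho^\ast$-holonomy is trivial, otherwise the flat connection does not extend. After you have deliberately made the new generators go to prescribed nontrivial elements of $\mathcal{N}$, it is not clear that a sufficient supply of $\rho^\ast$-trivial circles exists to kill exactly the homological excess introduced by the genus additions. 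The paper solves the analogous difficulty in Theorem~\ref{thm:extend1} by \emph{hiding} the surface relator inside the homology solid tori $\mathcal{C}_1$ rather than gluing surfaces and then patching; your Stage~2 instead reintroduces the homology and hopes to cancel it, which is exactly the step the $\mathcal{C}_1$-model construction was designed to avoid.

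In short, your decomposition into ``flatten, then subdivide'' is a reasonable blueprint and genuinely parallels the paper's intuition (Theorems~\ref{thm:extend1}/\ref{thm:extend2} for the ssc structure, Theorem~\ref{thm:refine2} for the small-generator phenomenon), but the three gaps above --- stabilization at Stage~1, unproven small-commutator factorization in $\homeo_0(X)$, and the incompatibility of the Quillen-style $2$-handle additions with the flat structure --- are each essential and remain open. This is consistent with the paper classifying the statement as a conjecture.
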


\section{Dynamics}
We use a somewhat more controlled version of the Fisher-Epstein Theorem \cites{fish,eps70}.

\begin{prop}
    The identity component of the homeomorphism group of every manifold is simple.
\end{prop}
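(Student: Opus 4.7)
The plan is the classical normal-subgroup attack: assume $1 \neq N \trianglelefteq \homeo_0(M)$ and show $N = \homeo_0(M)$. The two pillars are a fragmentation lemma and a commutator argument inside one coordinate ball, the latter closed off by an Anderson-style infinite-commutator trick which makes the compactly supported group $\homeo_c(B)$ perfect. First I would establish fragmentation: any $f$ connected to the identity by an isotopy $\{f_t\}$ can be decomposed as a finite product $f = g_1 \cdots g_k$ with each $\operatorname{supp}(g_i)$ inside a coordinate ball. Given a locally finite cover of $M$ by charts $V_i$ and a refinement $U_i \Subset V_i$, subdivide the isotopy in $t$ so finely that on each subinterval every point moves less than the Lebesgue number of $\{U_i\}$; an isotopy-extension / partition-of-unity argument then yields the factorization.

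Next I would use normality to push $N$ inside a single ball. Pick a nontrivial $h \in N$; composing $h$ with a small element of $\homeo_0(M)$ if necessary, arrange a coordinate ball $B$ with $h(B) \cap B = \emptyset$. For $f$ supported in $B$, the conjugate $h f^{-1} h^{-1}$ is supported in the disjoint set $h(B)$, so $[f,h] = f \cdot (h f^{-1} h^{-1}) \in N$ acts as $f$ on $B$ and as $h f^{-1} h^{-1}$ on $h(B)$. Commuting $[f,h]$ once more with an arbitrary $k \in \homeo_c(B)$ kills the $h(B)$-part and produces $[k,f] \in N$. Hence $N$ contains $[\homeo_c(B), \homeo_c(B)]$.

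The hard step is to upgrade this to $\homeo_c(B) \subseteq N$ by showing $\homeo_c(B)$ is perfect; this is Anderson's trick and is the heart of the proof. Fix disjoint sub-balls $B_0, B_1, \ldots \subset B$ shrinking to a point $p \in \partial B$, and a $\tau \in \homeo_c(B)$ with $\tau(B_n) = B_{n+1}$. Conjugating if needed, assume $\operatorname{supp}(f) \subset B_0$. The infinite product $F \coloneqq \prod_{n \geq 0} \tau^n f \tau^{-n}$ has pairwise disjoint supports inside the $B_n$ and extends by the identity through $p$; the continuity at $p$ follows from $\operatorname{diam}(B_n) \to 0$. Then $\tau F \tau^{-1} = \prod_{n \geq 1} \tau^n f \tau^{-n} = f^{-1} F$, whence $f = F \tau F^{-1} \tau^{-1} = [F,\tau]$, realizing every $f$ as a single commutator. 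Combined with transitivity of $\homeo_0(M)$ on small chart balls (conjugation spreads $\homeo_c(B)$ to every such ball) and the fragmentation lemma, this forces $N = \homeo_0(M)$. The main obstacle is the Anderson trick, especially verifying continuity of $F$ at the accumulation point and arranging a shift $\tau$ with the required properties; this is also exactly where the ``somewhat more controlled'' version the paper advertises will need a genuine refinement of the Fisher--Epstein construction.
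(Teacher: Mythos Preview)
Your argument is correct and is precisely the classical Fisher--Epstein--Anderson route. The paper, however, does not actually prove Proposition~1: it is stated as the Fisher--Epstein theorem, with the remark that Kirby's stability theorem removed Fisher's original hypothesis. What the paper \emph{does} prove is the controlled refinement (Proposition~2), and there its mechanism is genuinely different from your Anderson trick. Starting from fragmentation into commutators $f = \prod_i [a_i,b_i]$ with $a_i,b_i$ supported in small balls $U_i$, the paper invokes Tsuboi's explicit identity (line~(\ref{eq:ab_conj})): for any $h$ displacing a small neighborhood of a net point off itself, each $[\ubar{a},\ubar{b}]$ equals a product of exactly four conjugates $h \cdot (h^{-1})^{\ubar{c}} \cdot h^{\ubar{c}\ubar{b}} \cdot (h^{-1})^{\ubar{b}}$. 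This finite identity already yields simplicity (the normal closure of any $h\neq\id$ is everything), but the point is what it buys over the infinite-product trick: a bounded number of conjugates of $h^{\pm 1}$ per commutator, with conjugators that are explicit and $O(\epsilon)$ in sup-norm. Your Anderson construction, by contrast, produces the single commutator $f=[F,\tau]$ at the cost of $F$ being an infinite product with no sup-norm control on the factors $F$ and $\tau$. You correctly anticipate in your last sentence that the controlled version requires replacing the Anderson step; the replacement is exactly this four-conjugate identity.
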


Fisher actually worked under a stability assumption which became redundant in 1969 when Kirby \cite{kirby} proved all homeomorphisms are stable as part of his work on the Annulus Conjecture.\footnote{The 4-dimensional case was completed by Quinn \cite{quinn} in 1982.} While adding some control we rely heavily on Fisher's identities. Again all constructions may be done in the bilipschitz category.

Remarkably, it is possible to write the general $f \in \homeo_0(X)$ as a product of conjugates of any single homeomorphism $h$ and its inverse $h^{-1}$, where $h \neq \id \in \homeo_0(X)$ is arbitrary. By choosing the essential support $\operatorname{supp}(h)$ near a fine net $N \subset X$ it is further possible to control the support of all conjugators and in particular keep them small in the norm topology. Let us take $N_\epsilon$ to be a maximal collection of points with all distances $\geq \frac{\epsilon}{2} > 0$ and $\{\operatorname{U}_i\}$ the open cover of $\epsilon$-balls about $N_\epsilon$. Here is the construction using the concept of \emph{fragmentation} (see \cite{nariman}).

Since $\homeo_0(X)$ is perfect and has the fragmentation property w.r.t.\ the open cover $\{U_i\}$ we may write $f = \prod_{i=1}^r [a_i,b_i]$, $a_i,b_i$ supported in $\mathrm{U}_i$. It suffices to write each $[a_i,b_i]$ as product of conjugates of $h$ and $h^{-1}$. By assumption there is a $\delta$-neighborhood $V$ of $N$, $\delta << \frac{\epsilon}{2}$, so that $V \cap h(V) = \varnothing$. Let $g_i$ be a radial compression (a homeomorphism) supported near $\operatorname{U}_i$ so that $g_i(\operatorname{U}_i) \subset V_i$, $V_i$ being the component of $V$ containing the $i$th element of $N$. Writing $x^y$ for $yxy^{-1}$, and $\ubar{a}_i$ for $a_i^{g_i}$ and $\ubar{b}_i$ for $b_i^{g_i}$, similar to \cite{tsu08} (see Remark 6.6), we may write an identity expressing the basic commutators $[\ubar{a}, \ubar{b}]$ as a product of two conjugates of $h$ and two conjugates of $h^{-1}$ for a total of four conjugates. For convenience set $\ubar{c}_i = h^{-1} \ubar{a}_i h$, and on line (\ref{eq:ab_conj}) we suppress the subscript $i$ for readability to obtain
\begin{equation}
\begin{split}\label{eq:ab_conj}
    [\ubar{a},\ubar{b}] & = \ubar{a}\ubar{b}\ubar{a}^{-1}\ubar{b}^{-1} = h(h^{-1}\ubar{a}h)h^{-1}(\ubar{b}\ubar{a}^{-1}\ubar{b}^{-1}) \\
    & = h(\ubar{c}h^{-1}\ubar{c}^{-1})(\ubar{c}\ubar{b}h\ubar{c}^{-1}\ubar{b}^{-1})(\ubar{b}h^{-1}\ubar{b}^{-1}) \\
    & = h(h^{-1})^{\ubar{c}}(h^{\ubar{c}\ubar{b}})(h^{-1})^{\ubar{b}}
\end{split}
\end{equation}
where in the final equality we have used that $\ubar{c}$ and $\ubar{b}$ commute owing to there having disjoint support. So, restoring the $i$ subscript
\begin{equation}\label{eq:ab_conj_i}
    [\ubar{a}_i,\ubar{b}_i] = g_i^{-1}[\ubar{a}_i,\ubar{b}_i]g_i = h^{g_i^{-1}}(h^{-1})^{g_i^{-1}h^{-1}g_ia_ig_i^{-1}hg_i}h^{g_i^{-1}h^{-1}g_ia_ig_i^{-1}hg_ib}(h^{-1})^{g_ib_ig_i^{-1}}
\end{equation}

This yields
\begin{prop}\label{prop:x_conjugators}
    Let $X$ be a (metric) topological manifold and $f \in \homeo_0(X)$, then $f$ may be written as a product of conjugates of one homeomorphism $h$ and its inverse. If $h$ has essential support $\epsilon$-close to a net $N \subset X$, the conjugators, displayed in line (\ref{eq:ab_conj_i}) are themselves $O(\epsilon)$ in sup-norm. If $X$ is a bilipschitz manifold then any $f \in \operatorname{bilipschitz}_0(X)$ can be similarly written with all letters now bilipschitz homeomorphisms, with the parallel assertion on sup-norm still holding.
    \qed
\end{prop}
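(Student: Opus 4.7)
The plan is to assemble the conclusion directly from the fragmentation-plus-commutator identities worked out in the paragraph preceding the proposition statement. First I would combine Epstein-Fisher simplicity of $\homeo_0(X)$ with fragmentation with respect to the $\epsilon$-ball cover $\{U_i\}$ of the net $N_\epsilon$ to factor $f$ as $f = \prod_{i=1}^{r} [a_i, b_i]$, with $\operatorname{supp}(a_i), \operatorname{supp}(b_i) \subset U_i$. This localizes the problem: each commutator now lives inside a single $\epsilon$-ball, and the sup-norm of each letter is already $O(\epsilon)$.

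Next, for each $i$, I would radially compress $a_i, b_i$ via $g_i$ into the component $V_i \subset V$ of the $\delta$-neighborhood of $N$, so that the conjugated pair $\ubar{a}_i = a_i^{g_i}$, $\ubar{b}_i = b_i^{g_i}$ have supports disjoint from $h(V)$. This disjointness is precisely what makes $\ubar{c}_i = h^{-1}\ubar{a}_i h$ and $\ubar{b}_i$ commute, and hence what makes the algebraic identity (\ref{eq:ab_conj}) go through, expressing $[\ubar{a}_i, \ubar{b}_i]$ as a product of four conjugates of $h$ and $h^{-1}$. Conjugating back by $g_i^{-1}$ recovers $[a_i, b_i]$ in the form (\ref{eq:ab_conj_i}), and multiplying over $i$ displays $f$ as a product of $4r$ conjugates of $h^{\pm 1}$, which is the first assertion.

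The step I expect to require the most care is the sup-norm estimate: each conjugator in (\ref{eq:ab_conj_i}) is a word in $g_i^{\pm 1}, a_i, b_i, h^{\pm 1}$, all of whose supports lie in the union of an $O(\epsilon)$-thickening of $U_i$ with the essential support of $h$, which by hypothesis sits within an $\epsilon$-neighborhood of $N$. Outside that $O(\epsilon)$-region the conjugator is the identity, and inside it the total displacement is bounded by the region's diameter, hence $O(\epsilon)$ in sup-norm. The real obstacle is verifying that the specific ordering of letters in (\ref{eq:ab_conj_i}) does not stack displacements cumulatively across distinct net points; this reduces to the disjoint-support reasoning of the previous paragraph, combined with the fact that each net ball $U_i$ intersects only boundedly many others. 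The bilipschitz version follows verbatim, since Epstein-Fisher simplicity, fragmentation on $\{U_i\}$, and every algebraic manipulation in (\ref{eq:ab_conj}) and (\ref{eq:ab_conj_i}) preserve bilipschitz regularity, so each letter of the final word remains bilipschitz and the same sup-norm bound applies.
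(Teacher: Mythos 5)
Your proposal is correct and follows essentially the same route as the paper: factor $f$ via fragmentation and perfectness of $\homeo_0(X)$ into commutators supported in the net balls, compress each $a_i,b_i$ by $g_i$ so that the shifted pair and its $h$-conjugate have disjoint supports, apply the identity (\ref{eq:ab_conj}), and conjugate back to obtain (\ref{eq:ab_conj_i}). The only minor point is that the sup-norm estimate is most cleanly seen as a bounded-length triangle inequality — each conjugator is a word of bounded length in letters $g_i^{\pm1},a_i,b_i,h^{\pm1}$ each of sup-norm $O(\epsilon)$ — rather than by tracking supports, though your support-based reasoning reaches the same conclusion.
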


The ``smallness'' conclusion of Proposition \ref{prop:x_conjugators} are not required for the theorems proven in this paper, but are stated as they should be useful in studying Conjecture \ref{conj:gens} and the instanton example.

\begin{ext}
    Each $a_i$ and $b_i$ above can themselves be factored as on line (\ref{eq:ab_conj_i}) (being elements of $\homeo_0(X)$), thus $f$ may further be written as a product of commutators of elements which are themselves conjugates of the chosen $h$. This will be exploited in section 5.
\end{ext}

\section{The Local Models}
We will use explicit models $C_1$, $C_2$, $\lbar{C}_1$, $\lbar{C}_2$, and more general models, written in script and denoting a composition of the corresponding capital letter model $\mathcal{C}_1$, $\mathcal{C}_2$, $\lbar{\mathcal{C}}_1$, and $\lbar{\mathcal{C}}_2$. Both $C_1$ and $C_2$ are homology solid tori and $\lbar{C}_1$ and $\lbar{C}_2$ are fixed cobordisms, rel.\ boundary, to the standard solid torus $S^1 \times D^2$, so $\lbar{C}_1$ and $\lbar{C}_2$ are homology $S^1 \times D^3$'s. Both $C_1$ and $C_2$ will come with a homomorphism $\rho_i: \pi_1(C_i) \ra$ (a relevant structure group) for $i = 1,2$. $C_i$ is the closed complement of certain knot $k_i$ in a homology 3-sphere $\Sigma_i$, and the corresponding homology cobordisms $\lbar{C}_i$ will be the closed slice complements for $k_i$ in a corresponding homology 4-ball, $\mathcal{B}_i$. After two short paragraphs on preliminaries, precise definitions are given.

Let $P_1$ and $P_2$ be the Siefert fibered manifolds over $S^2$ with $(2,3,7)$ and $(2,3,5)$ multiplicities, respectively, describing the (three) exceptional fibers. $P_1$ is an $\tld{\operatorname{SL}}(2;\R)$-manifold whereas $P_2$ is spherical (actually it is the Poincare homology sphere). Let $P^-_i$, $i = 1,2$, denote the punctured manifold. Let $(\gamma_1, \de \gamma_1)$ be any essential embedded arc in $(P_1^-, \de)$ and let $(\gamma_2, \de \gamma_2)$ be any embedded arc in $(P_2^-, \de)$ representing an element of the binary icosahedral group $BI \cong \pi_1(P_2)$ of order $>2$. Note that $\pi_1(P_1)$ is torsion free, being an infinite cyclic extension of a hyperbolic triangle group.

For $i = 1,2$, let $\Sigma_i = P_i \# -P_i$ be the connected sum of the homology sphere and its mirror image, $\Sigma_i = P^-_i \bigcup_{S^2} -P^-_i$, where $P^-_i$ is the punctured homology sphere. The knot $k_i \coloneqq \gamma_i \cup -\gamma_i \subset P^-_i \bigcup_{S^2} -P^-_i$ is slice in the homology 4-ball $\mathcal{B}_i \cong P^-_i \times I$, $\de \mathcal{B}_i = \Sigma_i$, with the (slice disk)$= \gamma_i \times I$.

Let $C_i$ denote the closed complement $\Sigma_i \setminus \EuScript{N}(k_i)$, an integral homology $S^1 \times D^2$, and $\lbar{C}_i$ the closed complement ($\mathcal{B}_i\setminus$slice disk). We may view $\lbar{C}_i$ as a relative $H_\ast$-cobordism from $C_i$ to $S^1 \times D^2$. $\lbar{C}$ will be useful in building certain cobordisms $W_0$. (We use the subscript 0 to distinguish these model cobordisms from the general cobordism denoted throughout by $W$.)

There are natural maps:
\begin{equation}\label{eq:natmaps}
    [m],[l] \in \pi_1\tikzmark{a}C_i \xrightarrow{\mathrm{inc}_{\#}} \pi_1 P_i \ast \pi_1 P_i \xrightarrow{\mathrm{proj}_1} \pi_1 \tikzmark{b}P_i
    \tikz[overlay,remember picture] {\draw[->,square arrow] (a.south) to (b.south);}
    \tikz[overlay,remember picture] {\node at (-2.4,-0.6) {\tiny{$\alpha$}};} \\[1em]
\end{equation}
with $\alpha[l]$ having order $>2$ and $\alpha[m]$ trivial, $m$ the meridian of $k$ and $l$ the longitude.

$\pi_1(P_i)$ are central extensions of the $(2,3,7)$ and $(2,3,5)$ triangle groups respectively. For $i=1$ $\pi_1(P_i)$ is torsion free so the $\alpha[l]$ is merely required to be nontrivial. For $i = 2$ the aim is to exclude the central element of $\pi_1(P_2) \cong BI$, the binary icosahedral group. This is easily done by avoiding that element when choosing $\gamma_2$.

Given a homology sphere $P$ there is a ``spun'' homology sphere $Q$ in one higher dimension with $\pi_1(P) \cong \pi_1(Q)$. $Q$ is an open book with fiber $P^-$ and identity monodromy:
\begin{equation}
    Q = P^- \times I \slash\hphantom{.}^{p \times 0 \equiv p \times 1,\ p \in P^-}_{q \times s = q \times s^\pr,\ q \in \de P^-,\ s,s^\pr \in I = [0,1]}
\end{equation}

Consequently the preceeding set of examples can be extended to all dimensions:

Let $P_i^k$, $k \geq 3$, be $P_i$ spun to dim $k$, $i = 1,2$, so $P_i^3 \coloneqq P_i$. $\Sigma_i^k$, $\mathcal{B}_i^k$ are the corresponding doubled homology spheres and homology ball and $\gamma_i \times I$ is still a 2D slice disk in $\mathcal{B}_i^k$, with boundary $k_i^k$. To explain this notation, $k_i$ is a one-dimensional loop, a knot, and its superscript (later dropped) indicates the dimension of the homology ball on whose boundary it lies. Call the closed complements in $\Sigma_i^k$ and $\mathcal{B}_i^k$, $C_i^k$ and $\lbar{C}_i^k$ respectively. Thus $C_i^k \coloneqq \Sigma_i^k \setminus \EuScript{N}(k_i)$, and $\lbar{C}_i^k \coloneqq \mathcal{B}_i^k \setminus \EuScript{N}(\gamma_i \times I)$. $C_i^k$ is an integral homology $S^{k-2} \times D^2$ and $\lbar{C}_i^k$ is a homology $S^{k-2} \times D^3$ which may be viewed as a $\Z$-homology cobordism, constant over the boundary, to the standard $S^{k-2} \times D^2$. Line \ref{eq:natmaps} continues to hold except that when $k > 3$, there is no \emph{meridional} class in $\pi_1(C_i^k)$ since the meridinal boundary factor is $S^{k-2}$, which is simply connected. This explains the capital models. Next we discuss their composition, denoted by the corresponding script letters. Having introduced the superscript $k$ to keep track of the dimension of the model, we will generally omit it in what follows when it is clear from context, for example from the dimension of the manifold being modified.

We will use two basic operations to build $\mathcal{C}$ from $C$ and $\lbar{\mathcal{C}}$ from $\lbar{C}$. The first is (iterated) longitudinal boundary connected sum, which we call longitudinal sum, for the homology solid tori (in any dimension $C \coloneqq (S^{k-2} \times D^2)_H$) with boundary $\cong S^{k-2} \times S^1$. (When the dimension $k = 3$ a normal framing specifies the ``longitude'' and fixes the identification of $\de(S^1 \times D^2)_H$ with $S^1 \times S^1$.) In general, the sphere $S^{k-2}$-factor is the longitude and if $I \subset S^1$ is a fixed interval the longitudinal sum is a gluing
\[
    C\ \nsharp_{\text{long}} C^\pr = C \independent C^\pr \slash S^{k-2} \times I \subset \de C \text{ identified via } (\id_{S^{k-2}} \times \te) \text{ to } S^{k-2} \times I \subset \de C^\pr
\]
$\te$ the reflection on $I$. Similarly:
\[
    \lbar{C}\ \nsharp_{\text{long}}\lbar{C}^\pr = \lbar{C} \independent \lbar{C}^\pr \slash S^{k-2} \times I \times I \subset \de \lbar{C} \text{ identified via } (\id_{S^{k-2}} \times \te \times \id_I) \text{ to } S^{k-2} \times I \times I \subset \de \lbar{C}^\pr
\]
where the last interval factor is normal to $C$ in $\lbar{C}$. This is illustrated in Fig.\ \ref{fig:connsum} for $k=3$ in the case of 4-fold iterations.

The second way in which we compose homology solid tori is ``Bing doubling,'' implanting a pair of $C$'s in a standard $S^{k-2} \times D^2$ along a Bing double of its core, or in dimension $k > 3$ spun Bing doubles (see \cite{krush} and our Figure \ref{fig:bingd}). The Bing doubling operation relates easily to group commutators, whereas the first operation relates to group multiplication. We actually require \emph{ramified} Bing doublings \cite{fq}, a simple extension corresponding to \emph{products} of commutators, explained below. In the case that bars are present, the Bing doubling transformation $\lbar{C} \ra \lbar{\mathcal{C}}$, is not \emph{replacement}, but the \emph{attachment} of two homology cobordisms $(\lbar{C}, C)$ along Bing pairs. On the boundary, this attachment agrees with the former operation of \emph{replacement}.

We change capital letters to script to indicate that our models have been composed according to longitudinal sums and Bing doubling.

For us the use of these models, particularly the un-barred $\mathcal{C}_i$ will be to fill in what Thurston \cite{thur76} calls \emph{holes} in foliations, which later morphed into the theory of fissures \cite{mei}.

\section{Proofs}
For the proof of Theorem \ref{thm:extend1}, the MT Theorem \ref{thm:mt} may be treated as a black box; we just need the statement that $W$ exists and then use surgery techniques and the models $C_1$ and $\lbar{C}_1$ to improve $W$ to a ssc while retaining the flat connection over its right end.

In contrast, the proof of Theorem \ref{thm:extend2} requires following in some detail Meigniez's proof of MT \cite{mei} and intervening at the correct moment with models $C_1^l$ and $\lbar{C}_1^l$, for various dimension $l$.

\begin{proof}[Proof of Theorem \ref{thm:extend1}]
    Recall the output of the MT Theorem is a cobordism of $X$-bundles:
    \begin{figure}[!ht]
        \centering
        \begin{tikzpicture}
            \node at (0,0) {$B\ \hookrightarrow \lbar{B}\ \hookleftarrow\ B^\ast$};
            \node at (0.1,-1) {$V \hookrightarrow W^4 \hookleftarrow V^\ast$};
            \draw[->] (-0.1,-0.3) -- (-0.1,-0.7);
            \draw[->] (1.1,-0.3) -- (1.1,-0.7);
            \draw[->] (-1.1,-0.3) -- (-1.1,-0.7);
        \end{tikzpicture}
    \end{figure}

    Give $W^4$ a smooth handle decomposition relative to $V$ and cancel any 0-handles and 4-handles (without changing $W$). Since $V$ (and $V^\ast$) may be presumed connected, any 1-handle (and any 3-handle) may be traded for a 2-handle at the expense of modifying $W$ by a trivial 1-surgery. It is easy to modify interior $W$ in this way but we need to carry the bundle cobordism $\lbar{B}$ along. This is one of two places the proof will use that the structure group of $B$ and hence $\lbar{B}$ is $\homeo_0(X)$. This guarantees that $\lbar{B}\vert_\gamma$ is trivial for any simple closed curve $\gamma$ (scc) in $W$, allowing the 1-surgery to be covered by a relative cobordism of bundles $(\lbar{B};B;B^\pr)$ over a (5D) cobordism of $W$. Three handles are dealt with as 1-handles by turning $W$ upside down. At this point we have reduced to the case where $W$ has handles of index 2 only. The attaching regions of these 2-handles determine a framed link $L \subset V$ and dually the 2-handle co-cores determine a framed link $L^\ast \subset V^\ast$. Framed surgery along $L$ produces $V^\ast$ from $V$, and dually framed surgery along $L^\ast$ produces $V$ from $V^\ast$. If we could actually do surgery on $L^\ast$ and propogate the topologically flat connection across the surgery we would have solved the flattening problem without changing $V$ by a cobordism. This cannot be done, generally, because surgery bounds each framed longitude of $L^\ast$ by a disk $D$ which can only be covered by a flat bundle if the holonomy around the longitude, $\de D$, is trivial. Traditionally, this problem is approached by replacing $D$ with a genus $g$ surface with circle boundary $S_g$ and exploiting the commutator structure of the boundary. But replacing a disk $D$ with a surface $S_g$ adds homology and is a much coarser modification than replacement by a ssc manifold. While we do need the commutator structure of $\pi_1(S_g)$, in dimension $\geq 3$, we are able to \emph{hide} it in a ssc manifold. So what we do instead is a homologically more subtle replacement: There is a homological version of surgery on $L^\ast$ employing the models $C_1$ (and $\lbar{C}_1$) which \emph{does} propogate the flat connection to $V^\ast$ and changes, in the end, $V$ only slightly, by a ssc. See Fig.\ \ref{fig:sscholo} to visualize these manipulations, and for a pictorial summary of the proof plan. The idea that the surface group relation can \emph{hide} inside a 3-manifold without producing first homology is as familiar as an incompressible surface in a homology 3-sphere. Glancing ahead to Figure \ref{fig:rambingd} and line (\ref{eq:abj}) we see the general surface relator but the homology classes $a_i$ and $b_i$ vanish in the model $\mathcal{C}_1$.

    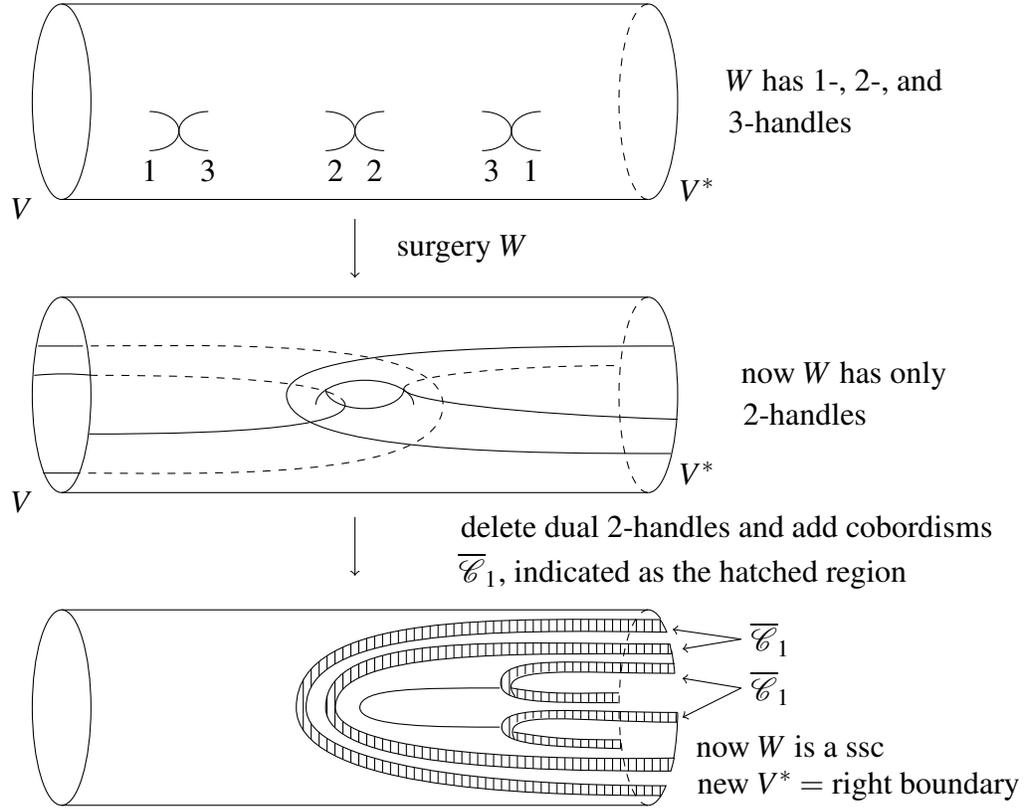
\begin{figure}[!ht]
        \centering
        \begin{tikzpicture}[scale=1.3]
            \draw (-2.95,2) ellipse (0.3 and 1);
            \draw (3.05,3) arc (90:-90:0.3 and 1);
            \draw[dashed] (3.05,3) arc (90:270:0.3 and 1);
            \draw (-2.95,3) -- (3.05,3);
            \draw (-2.95,1) -- (3.05,1);
            \draw (-2.05,1.9) arc (90:-90:0.3 and 0.2);
            \draw (-1.45,1.9) arc (90:270:0.3 and 0.2);
            \draw (-0.25,1.9) arc (90:-90:0.3 and 0.2);
            \draw (0.35,1.9) arc (90:270:0.3 and 0.2);
            \draw (1.35,1.9) arc (90:-90:0.3 and 0.2);
            \draw (1.95,1.9) arc (90:270:0.3 and 0.2);
            \node at (-2.05,1.3) {1};
            \node at (-1.45,1.3) {3};
            \node at (-0.15,1.3) {2};
            \node at (0.25,1.3) {2};
            \node at (1.45,1.3) {3};
            \node at (1.85,1.3) {1};
            \node at (-3.35,0.9) {$V$};
            \node at (3.55,1.1) {$V^\ast$};
            \node at (4.95,2.2) {$W$ has 1-, 2-, and};
            \node at (4.5,1.8) {3-handles};
            \draw[->] (0.05,0.8) -- (0.05,0.2);
            \node at (1.15,0.5) {surgery $W$};

            \draw (-2.95,-1) ellipse (0.3 and 1);
            \draw (3.05,0) arc (90:-90:0.3 and 1);
            \draw[dashed] (3.05,0) arc (90:270:0.3 and 1);
            \draw (-2.95,0) -- (3.05,0);
            \draw (-2.95,-2) -- (3.05,-2);
            \node at (-3.35,-2.1) {$V$};
            \node at (3.55,-1.8) {$V^\ast$};
            \draw(-3.14,-1.8) -- (-2.84,-1.8);
            \draw[dashed] (-2.84,-1.8) .. controls (-1.55,-1.8) and (0.95,-1.9) .. (0.95,-1.1) .. controls (0.95,-0.4) and (-1.45,-0.5) .. (-2.81,-0.5);
            \draw(-2.81,-0.5) -- (-3.19,-0.5);
            \draw[dashed] (0.55,-0.94) .. controls (0.55,-0.8) and (1.65,-0.7) .. (2.75,-0.7);
            \draw (0.55,-0.94) .. controls (0.55,-1.05) and (1.65,-1.2) .. (3.35,-1.25);
            \draw (-0.05,-1.1) .. controls (-0.05,-1.4) and (-2.05,-1.4) .. (-2.67,-1.4);
            \draw[dashed] (-0.05,-1.1) .. controls (-0.1,-0.8) and (-2.05,-0.8) .. (-2.63,-0.8);
            \draw (-2.63,-0.8) to[out=175,in=5] (-3.23,-0.8);
            \draw (3.3,-0.5) .. controls (1.85,-0.5) and (-0.65,-0.5) .. (-0.65,-1) .. controls (-0.65,-1.6) and (1.85,-1.6) .. (3.28,-1.6);
            \draw (0.65,-1.1) arc (0:180:0.5 and 0.25);
            \draw (0.55,-0.94) arc (0:-180:0.4 and 0.2);
            \node at (5.05,-0.8) {now $W$ has only};
            \node at (4.65,-1.2) {2-handles};
            
            \draw[->] (0.05,-2.25) -- (0.05,-2.85);
            \node at (3.85,-2.35) {delete dual 2-handles and add cobordisms};
            \node at (3.4,-2.8) {$\lbar{\mathcal{C}}_1$, indicated as the hatched region};

            \draw (-2.95,-4.2) ellipse (0.3 and 1);
            \draw[dashed] (3.05,-3.2) arc (90:270:0.3 and 1);
            \draw (3.05,-3.2) arc (90:65:0.3 and 1);
            \draw (3.05,-5.2) arc (270:295:0.3 and 1);
            \draw (-2.95,-3.2) -- (3.05,-3.2);
            \draw (-2.95,-5.2) -- (3.05,-5.2);
            \draw[pattern = vertical lines] (3.15,-5.1) .. controls (1.65,-5.1) and (-0.55,-5) .. (-0.55,-4.2) .. controls (-0.55,-3.2) and (1.65,-3.3) .. (3.17,-3.3) arc (68:53:0.3 and 1) .. controls (1.45,-3.4) and (-0.45,-3.4) .. (-0.45,-4.2) .. controls (-0.45,-4.8) and (1.45,-5) .. (3.23,-5) arc (-53:-65:0.3 and 1) -- cycle;
            \draw[pattern = vertical lines] (3.27,-3.55) .. controls (1.64,-3.55) and (-0.25,-3.4) .. (-0.25,-4.2) .. controls (-0.25,-4.8) and (1.75,-4.85) .. (3.28,-4.85) arc (-45:-35:0.3 and 1) .. controls (1.45,-4.75) and (-0.15,-4.6) .. (-0.15,-4.2) .. controls (-0.15,-3.6) and (1.74,-3.65) .. (3.29,-3.65) arc (33:38:0.3 and 1) -- cycle;
            \draw (3.32,-3.75) .. controls (2.34,-3.75) and (1.54,-3.65) .. (1.54,-3.95) .. controls (1.54,-4.15) and (1.94,-4.15) .. (2.74,-4.15);
            \draw (3.32,-3.85) .. controls (2.35,-3.85) and (1.43,-3.75) .. (1.69,-4.09);
            \draw (1.65,-4) to [out=-10,in=180] (2.75,-4.05);
            \draw (3.32,-3.85) arc (18:33:0.3 and 1);
            \fill[pattern = vertical lines] (3.32,-3.75) .. controls (2.34,-3.75) and (1.54,-3.65) .. (1.54,-3.95) .. controls (1.54,-4.15) and (1.94,-4.15) .. (2.74,-4.15) -- (2.75,-4.05) to[out=180,in=-10] (1.65,-4) .. controls (1.58,-3.75) and (2.35,-3.85) .. (3.32,-3.85) -- cycle;
            \draw (3.35,-4.25) .. controls (2.35,-4.25) and (1.55,-4.15) .. (1.55,-4.4) .. controls (1.55,-4.6) and (2.15,-4.6) .. (2.77,-4.62);
            \draw (3.35,-4.35) .. controls (2.35,-4.35) and (1.52,-4.2) .. (1.68,-4.52);
            \draw (1.67,-4.45) to[out=-10,in=180] (2.77,-4.53);
            \draw (3.35,-4.25) arc (-2:-30:0.3 and 1);
            \fill[pattern = vertical lines] (3.35,-4.25) .. controls (2.35,-4.25) and (1.55,-4.15) .. (1.55,-4.4) .. controls (1.55,-4.6) and (2.15,-4.6) .. (2.77,-4.62) -- (2.77,-4.53) to [out=180,in=-10] (1.67,-4.45) .. controls (1.6,-4.2) and (2.35,-4.35) .. (3.35,-4.35) -- cycle;
            \draw (1.53,-4) .. controls (0.7,-4) and (0.1,-4) .. (0.1,-4.2) .. controls (0.1,-4.4) and (0.9,-4.4) .. (1.53,-4.4);
            \node at (4.3,-3.5) {$\lbar{\mathcal{C}}_1$};
            \draw[->] (4,-3.5) -- (3.3,-3.4);
            \draw[->] (4,-3.5) -- (3.4,-3.6);
            \node at (4.3,-4) {$\lbar{\mathcal{C}}_1$};
            \draw[->] (4,-4) -- (3.4,-3.9);
            \draw[->] (4,-4) -- (3.4,-4.3);
            \node at (4.5,-4.6) {now $W$ is a ssc};
            \node at (5.2,-5) {new $V^\ast =$ right boundary};

            \node at (-5.5,0) {\hspace{0.5em}};
        \end{tikzpicture}
        \caption{$\lbar{\mathcal{C}}_1$ are composition built from several copies of $\lbar{C}_1$}\label{fig:sscholo}
    \end{figure}

    Next we explain how the models $\lbar{\mathcal{C}}_1$ are constructed to permit an extension of the topologically flat connection (i.e.\ the representation to $\homeo_0(X)$), and why the cobordisms indicated in Figure \ref{fig:sscholo} and the discussion above, is in fact a ssc.

    To apply Proposition \ref{prop:x_conjugators}, we need a homomorphism from $\pi_1(C_1)$ to $\homeo_0(X)$, factoring through a homomorphism $\rho: \pi_1(P_1) \ra \homeo_0(X)$ so that for some element $\alpha \in \pi_1(P_1)$, $\rho(\alpha)$ has the property of $h$ in Proposition \ref{prop:x_conjugators}, that for a fine net $\lgast$, $\rho(\alpha) \lgast \cap \lgast = \varnothing$. Actually we now construct a representation $\rho$ so that for every $\alpha \neq e \in \pi_1(P_1)$, $\rho(\alpha)$ has this property.

    As Thurston \cite{thur74c} observed, $\pi_1(P_1)$ is naturally a subgroup of $\tld{\operatorname{PSL}}(2,\R)$, the universal cover of $\operatorname{PSL}(2,\R)$. $\operatorname{PSL}(2,\R)$ is the group of oriented isometries of the hyperbolic plane $H^2$ and acts faithfully on the circle at infinity $S^1$ via M\"{o}bious transformations. Thus $\tld{\operatorname{PSL}}(2,\R)$ acts smoothly on the real line $\R = \tld{S}^1$, and taking the end compactification we obtain a continuous (and bi-Lipschitz) action of $\tld{\operatorname{PSL}}(2,\R)$ on the closed interval $I \coloneqq [0,1]$. This action is faithful. It is easy to promote this action to a representation $\pi_1(P_1) \ra \homeo(D^n; \id \text{ on } \de)$. To do so implant this action on an interval $I \subset S^1$, then suspend and delete a fixed disk; one obtains a faithful action on $D^2$. Iterating implantation, suspension, and deletion, one obtains a faithful action on the $n$-cell $D^n$. Finally, given the ``net'' $\lgast \subset X$, we produce $h$ as in section 3 by allowing $\rho$ to act on a small ball around each point $\ast \in \lgast$. We employ such an $h$ to make our construction as local as possible, with an eye toward future applications. Then for any $\alpha \neq e \in \pi_1(P_1)$ the composition $\rho(\alpha)$
    \begin{equation}
        \rho: \pi_1(P_1) \ra \tld{\operatorname{PSL}}(2,\R) \ra \homeo_0(X),\ \alpha \mapsto \rho(\alpha)
    \end{equation}
    has the property of $h$ in Proposition \ref{prop:x_conjugators}. Note that since $\pi_1(P_1)$ is perfect, Thurston's stability theorem says that there is no nontrivial representation $\pi_1(P_1) \ra \operatorname{Diff}^1(D^n, \de D^n)$, so this bilipschitz category construction cannot be carried out differentiably.

    As illustrated in Figure \ref{fig:sscholo}, we construct the ssc $W$ as $V \times I \cup W_0$, where $W_0$, built from a collection of $\lbar{C}_1$ ($W_0$ is a disjoint union of $\lbar{\mathcal{C}}_1$), is itself a ssc on $N \cong \coprod S^1 \times D^2$, a neighborhood of $L^\ast$,
    \tikz[anchor=base, baseline] {\node at (0,0) {$N \hookrightarrow \lbar{\mathcal{C}}_1 \hookleftarrow \mathcal{C}_1$};
    \node at (-0.7,0.2) {$\xleftarrow{r}$};}
    (constant on the boundary), so that the new $V^\ast$ has the form $V^\ast = (V \setminus N) \cup (\independent \lbar{\mathcal{C}}_1)$ and has a topological flat connection agreeing with the restrictions to the flat connection $A$ over $\de(V)$.

    The connection $A$ is trivial on a meridian to $L^\ast$ but in general is nontrivial on the framed longitude $l_i$. Conventional surgery would bound each $l_i$ by a disk over which the connection cannot extend flatly (unless the holonomy around $l_i$ is trivial). This problem is solved by the homology surgery, using $\lbar{\mathcal{C}}$, which we now commence building.

    Since our model cobordism $(\lbar{C}_i; C_i, S^1 \times D^2)$ is a homology product with $\Z$-coefficients, but not group ring $\Z[\Z]$ coefficients, we next use a trick, Bing doubling, to isolate each copy of $\lbar{C}$ from the fundamental group, as illustrated in Figure \ref{fig:bingd}. The two components of the Bing double are null homotopic in the ambient solid torus so they and whatever is glued to them lift to the $\pi_1(W)$-cover: they do not unwrap. This allows us to easily compute homology in that cover. By contrast, when knot complements are unwrapped in a cover additional homology generally appears. For example the 6-fold cyclic cover of the trefoil knot complement is $S^1 \times T^2_-$, the circle cross a punctured torus.

    Surgery will glue in models schematically as in Fig.\ \ref{fig:bingd} but with additional ramifications\footnote{Ramification (see \cite{fq}) is a technical term for enhancing one Bing pair, as drawn inside the larger solid torus in Figure \ref{fig:bingd}, to $k$ such pairs. Each pair is successively closer to the boundary; look ahead to Figure \ref{fig:rambingd}. Note that this differs from taking parallel copies of the components of an unramified Bing double. Ramification corresponds to increasing the genus of the Siefert surface for the loop $\hat{m}_i$ of Figures \ref{fig:bingd} and \ref{fig:rambingd}.}:

    \begin{figure}[!ht]
        \centering
        \begin{tikzpicture}[scale=1.4]
            \draw (0,0) ellipse (3 and 1.5);
            \draw (0.5,0) arc (0:-180:0.5 and 0.25);
            \draw (0.45,-0.12) arc (0:180:0.45 and 0.15);
            \draw (0,1.5) arc (90:270:0.25 and 0.73);
            \draw[color=white, line width = 1mm] (-1.1,-0.1) to [out=70,in=110] (1.1,-0.1);
            \draw (-1.1,-0.1) to [out=70,in=110] (1.1,-0.1);
            \draw[color=white, line width = 1mm] (-1.3,-0.1) to [out=70,in=180] (0,0.67) to [out=0,in=110] (1.3,-0.1);
            \draw (-1.25,-0.05) to [out=70,in=180] (0,0.67) to [out=0,in=110] (1.25,-0.05);
            \draw (-1.8,-0.4) .. controls (-1.6,0.2) and (-0.8,-0.4) .. (0,-0.4) .. controls (0.8,-0.4) and (1.6,0.2) .. (1.8,-0.4);
            \draw (-1.7,-0.5) .. controls (-1.5,0) and (-0.8,-0.55) .. (0,-0.55) .. controls (0.8,-0.55) and (1.5,0) .. (1.7,-0.5);
            \draw[color=white, line width = 1mm] (-1.35,-0.35) .. controls (-1.6,-0.8) and (-2.2,-0.5) .. (-2,0.2) .. controls (-1.8,0.9) and (-0.5,1.1) .. (0,1.1) .. controls (0.5,1.1) and (1.8,0.9) .. (2,0.2) .. controls (2.2,-0.5) and (1.6,-0.8) .. (1.35,-0.35);
            \draw (-1.35,-0.35) .. controls (-1.6,-0.8) and (-2.2,-0.5) .. (-2,0.2) .. controls (-1.8,0.9) and (-0.5,1.1) .. (0,1.1) .. controls (0.5,1.1) and (1.8,0.9) .. (2,0.2) .. controls (2.2,-0.5) and (1.6,-0.8) .. (1.35,-0.35);
            \draw[color=white, line width = 1mm] (-1.2,-0.38) .. controls (-1.6,-1.1) and (-2.35,-0.5) .. (-2.17,0.15) .. controls (-2,1.1) and (-0.5,1.25) .. (0,1.25) .. controls (0.5,1.25) and (2,1.1) .. (2.17,0.15) .. controls (2.35,-0.5) and (1.6,-1.1) .. (1.2,-0.38);
            \draw (-1.2,-0.38) .. controls (-1.6,-1.1) and (-2.35,-0.5) .. (-2.17,0.15) .. controls (-2,1.1) and (-0.5,1.25) .. (0,1.25) .. controls (0.5,1.25) and (2,1.1) .. (2.17,0.15) .. controls (2.35,-0.5) and (1.6,-1.1) .. (1.2,-0.38);
            \draw (-1.9,-0.7) .. controls (-2.1,-1.3) and (-0.5,-1.4) .. (0,-1.4) .. controls (0.5,-1.4) and (2.1,-1.3) .. (1.9,-0.7);
            \draw (-1.73,-0.79) .. controls (-1.8,-1.1) and (-0.4,-1.25) .. (0,-1.25) .. controls (0.4,-1.25) and (1.8,-1.1) .. (1.73,-0.79);
            \draw (-0.8,0.39) ellipse (0.07 and 0.095);
            \draw[color=white, line width = 1mm] (0.12,1.4) arc (60:-70:0.25 and 0.73);
            \draw (0,1.5) arc (90:-90:0.25 and 0.73);
            \draw (-0.6,-0.39) ellipse (0.07 and 0.085);
            \draw [->] (-1,0.45) to (-0.8,0.6);
            \draw [->] (-0.65,-0.25) to (-0.45,-0.3);

            \node at (0.5,0.1) {\tiny{$m_i \mapsto e$}};
            \node at (-0.65,0.2) {\tiny{$m_1$}};
            \node at (-0.5,-0.6) {\tiny{$m_2$}};
            \node at (-1,0.7) {\tiny{$l_1 \mapsto e$}};
            \node at (-0.8,-0.15) {\tiny{$l_2 \mapsto e$}};
            \node at (0.4,2) {$l_i = \hat{m}_i$};
            \draw[->] (0.3,1.9) -- (0,1.6);
            \node at (4.1,2.3) {A pair of copies};
            \node at (3.9,1.9) {of $C_1$ or their};
            \node at (4.2,1.5) {longitudinal sums};
            \node at (3.8,1.1) {glue in here};
            \draw[->] (3,1.7) -- (1.6,1);
            \draw[->] (3,1.7) -- (2,-0.8);
            \node at (4.6,0) {Borromean complement,};
            \node at (4.75,-0.4) {presentation of Borromean};
            \node at (4.3,-0.8) {group $l_i = [m_1,m_2]$,};
            \node at (4.7,-1.2) {$l_1 = [m_2,m_i]$, $l_2 = [m_i, m_1]$};
            \draw [->] (3.05,0.05) -- (2.7,-0.3);

            \node at (-5,0) {\hspace{1em}};
        \end{tikzpicture}
        \caption{Picture of a Bing double. $l_i$ is a longitude of $L_i \subset L$, and drawn here as the meridian to the surgery-dual link $L^\ast$. The notation has been abbreviated: $l_1$, $l_2$, $m_1$, and $m_2$ would more precisely be written $l_1^B$, $l_2^B$, $m_1^B$, and $m_2^B$ to indicate that they label the Bing double. This model exhibits the basic commutator relations but is too simple for our application: we need ramification and internal longitudinal sums, see Figure \ref{fig:connsum}.}
        \label{fig:bingd}
    \end{figure}
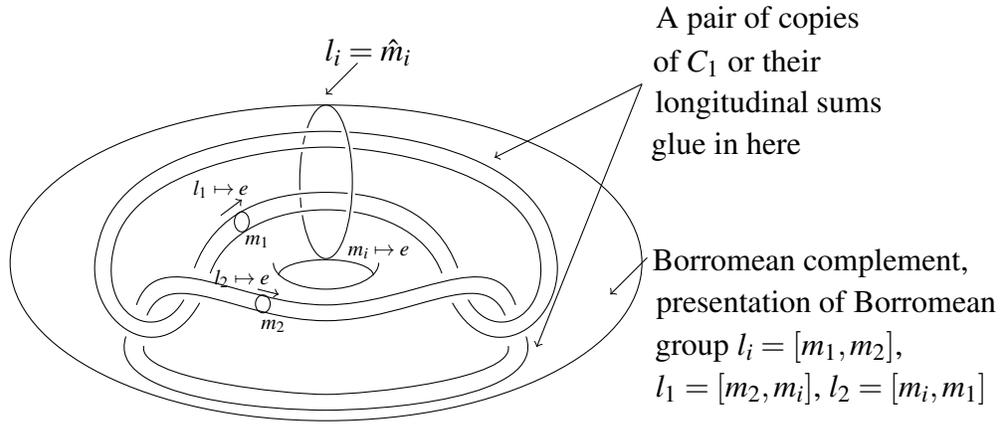

    For a component $l$ of $L^\ast$, the holonomy $\mathrm{hol}(l)$ along $l$, which must extend over $C$, lies in the identity component $\homeo_0(X)$. Using the extension of Proposition \ref{prop:x_conjugators}, write $\operatorname{hol}(l) = \prod_{i=1}^m [a_i, b_i]$ and write each $a_i$ and each $b_i$ as a product of four conjugates, two of $h$ and two of $h^{-1}$. Geometrically these 4-fold products translate into longitudinal sums of four copies of $C_1$ (and of $\lbar{C}$, at the level of bordisms). Then the sums are implanted (or attached to at the level of bordisms) to an $m$-fold ramified Bing pair, with $(a_i, b_i)$ occupying the two halves of one of the pairs $1 \leq i \leq m$.
    
    This leads us to the ramified version of Fig.\ \ref{fig:bingd} (Fig.\ \ref{fig:rambingd} below) necessary to create the model $(\lbar{\mathcal{C}}_1; \mathcal{C}_1, S^1 \times D^2)$ required to extend $\mathrm{hol}(l)$ on the meridian $\hat{m}_i$ of Figures \ref{fig:bingd} and \ref{fig:rambingd} over $\mathcal{C}_1$, the homology solid torus used to complete homology-surgery. We have used script $\mathcal{C}_1$ to denote the longitudinal sums and Bing doubling needed to pass from the base model $C_1$ to the solution to the extension problem. Similarly, we will write $\lbar{\mathcal{C}}_1$ for the corresponding 4D ssc.

    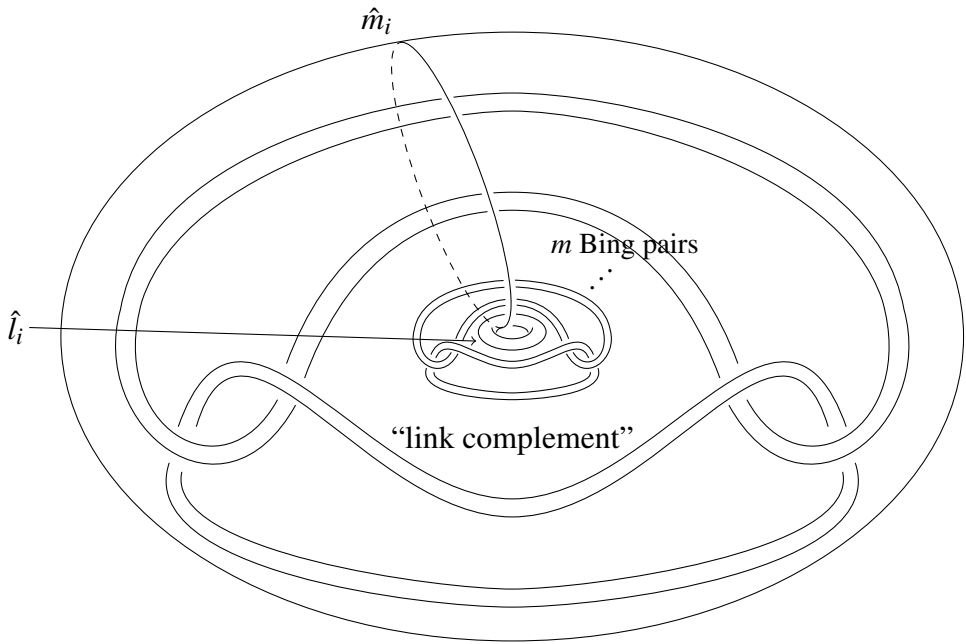
\begin{figure}[!ht]
        \centering
        \begin{tikzpicture}[scale=0.6]
            \draw (4*-1.15,4*-0.23) to [out=70,in=180] (0,2.8) to[out=0,in=110] (4*1.15,4*-0.23);
            \draw (4*-1.25,4*-0.15) to [out=70,in=180] (0,4*0.8) to [out=0,in=110] (4*1.25,4*-0.15);
            \draw (4*-1.85,4*-0.5) .. controls (4*-1.5,4*0.5) and (4*-0.8,4*-0.9) .. (0,4*-0.9) .. controls (4*0.8,4*-0.9) and (4*1.5,4*0.5) .. (4*1.85,4*-0.5);
            \draw (4*-1.75,4*-0.5) .. controls (4*-1.5,1.4) and (4*-0.8,4*-1) .. (0,4*-1) .. controls (4*0.8,4*-1) and (4*1.5,1.4) .. (4*1.75,4*-0.5);
            \draw (4*-1.3,4*-0.35) .. controls (4*-1.6,4*-0.9) and (4*-2.25,4*-0.5) .. (4*-2.05,4*0.2) .. controls (4*-1.8,4*0.95) and (4*-0.5,4*1.25) .. (0,4*1.25) .. controls (4*0.5,4*1.25) and (4*1.8,4*0.95) .. (4*2.05,4*0.2) .. controls (4*2.25,4*-0.5) and (4*1.6,4*-0.9) .. (4*1.3,4*-0.35);
            \draw (4*-1.2,4*-0.38) .. controls (4*-1.6,4*-1.1) and (4*-2.35,4*-0.5) .. (4*-2.17,4*0.15) .. controls (4*-2,4*1.1) and (4*-0.5,4*1.35) .. (0,4*1.35) .. controls (4*0.5,4*1.35) and (4*2,4*1.1) .. (4*2.17,4*0.15) .. controls (4*2.35,4*-0.5) and (4*1.6,4*-1.1) .. (4*1.2,4*-0.38);
            \draw (4*-1.9,4*-0.7) .. controls (4*-2.1,4*-1.3) and (4*-0.5,4*-1.5) .. (0,4*-1.5) .. controls (4*0.5,4*-1.5) and (4*2.1,4*-1.3) .. (4*1.9,4*-0.7);
            \draw (4*-1.83,4*-0.75) .. controls (4*-1.93,4*-1.2) and (4*-0.4,4*-1.4) .. (0,4*-1.4) .. controls (4*0.4,4*-1.4) and (4*1.93,4*-1.2) .. (4*1.83,4*-0.75);

            \draw[color=white, line width = 1mm] (-1.1,-0.1) to [out=70,in=110] (1.1,-0.1);
            \draw (-1.1,-0.1) to [out=70,in=180] (0,0.7) to[out=0,in=110] (1.1,-0.1);
            \draw (-1.25,-0.05) to [out=70,in=180] (0,0.83) to [out=0,in=110] (1.25,-0.05);
            \draw (-1.8,-0.4) .. controls (-1.5,0.3) and (-0.8,-0.55) .. (0,-0.55) .. controls (0.8,-0.55) and (1.5,0.3) .. (1.8,-0.4);
            \draw (-1.7,-0.5) .. controls (-1.5,0.1) and (-0.8,-0.7) .. (0,-0.7) .. controls (0.8,-0.7) and (1.5,0.1) .. (1.7,-0.5);
            \draw[color=white, line width = 1mm] (-1.35,-0.35) .. controls (-1.6,-0.8) and (-2.2,-0.5) .. (-2,0.2) .. controls (-1.8,0.9) and (-0.5,1.1) .. (0,1.1) .. controls (0.5,1.1) and (1.8,0.9) .. (2,0.2) .. controls (2.2,-0.5) and (1.6,-0.8) .. (1.35,-0.35);
            \draw (-1.35,-0.35) .. controls (-1.6,-0.8) and (-2.2,-0.5) .. (-2,0.2) .. controls (-1.8,0.9) and (-0.5,1.1) .. (0,1.1) .. controls (0.5,1.1) and (1.8,0.9) .. (2,0.2) .. controls (2.2,-0.5) and (1.6,-0.8) .. (1.35,-0.35);
            \draw[color=white, line width = 1mm] (-1.2,-0.38) .. controls (-1.6,-1.1) and (-2.35,-0.5) .. (-2.17,0.15) .. controls (-2,1.1) and (-0.5,1.25) .. (0,1.25) .. controls (0.5,1.25) and (2,1.1) .. (2.17,0.15) .. controls (2.35,-0.5) and (1.6,-1.1) .. (1.2,-0.38);
            \draw (-1.2,-0.38) .. controls (-1.6,-1.1) and (-2.35,-0.5) .. (-2.17,0.15) .. controls (-2,1.1) and (-0.5,1.25) .. (0,1.25) .. controls (0.5,1.25) and (2,1.1) .. (2.17,0.15) .. controls (2.35,-0.5) and (1.6,-1.1) .. (1.2,-0.38);
            \draw (-1.9,-0.7) .. controls (-2.1,-1.3) and (-0.5,-1.4) .. (0,-1.4) .. controls (0.5,-1.4) and (2.1,-1.3) .. (1.9,-0.7);
            \draw (-1.73,-0.79) .. controls (-1.8,-1.1) and (-0.4,-1.25) .. (0,-1.25) .. controls (0.4,-1.25) and (1.8,-1.1) .. (1.73,-0.79);

            \draw (0,0.1) ellipse (0.75 and 0.4);
            \draw (0.45,0.2) arc (0:-180:0.45 and 0.25);
            \draw (0.35,0.09) arc (0:180:0.35 and 0.15);
            \draw[color=white, line width = 1.5mm, rotate around={20:(-0.2,0.2)}] (-0.2,0.2) arc(-90:90:0.7 and 3.36);
            \draw[rotate around={20:(-0.2,0.2)}] (-0.2,0.2) arc(-90:90:0.7 and 3.36);
            \draw[dashed,rotate around={20:(-0.2,0.2)}] (-0.2,0.2) arc(-90:-270:0.7 and 3.36);
            \draw (0,0) ellipse (10 and 6.75);

            \node at (-3,7) {$\hat{m}_i$};
            \node at (-11,0.2) {$\hat{l}_i$};
            \draw[->] (-10.7,0.2) -- (-0.8,-0.1);)
            \node at (2.5,2) {\small{$m$ Bing pairs}};
            \node[rotate=45] at (2,1.3) {$\dots$};
            \node at (11,0) {$\hspace{0.25em}$};

            \node at (0,-2.3) {``link complement''};
        \end{tikzpicture}
        \caption{$m$-ramified Bing double.}
        \label{fig:rambingd}
    \end{figure}

    Each Bing pair consists on each side (see Figure \ref{fig:connsum}) of a 4-fold longitudinal boundary connected sum of homology solid tori $C_1$, corresponding to the multiplicities on line (\ref{eq:ab_conj_i}).

    \begin{figure}[!ht]
        \centering
        \begin{tikzpicture}[scale=1.3]
            \draw[color=white, line width = 1mm] (-1.1,-0.1) to [out=70,in=110] (1.1,-0.1);
            \draw (-1.1,-0.1) to [out=70,in=110] (1.1,-0.1);
            \draw[color=white, line width = 1mm] (-1.3,-0.1) to [out=70,in=180] (0,0.67) to [out=0,in=110] (1.3,-0.1);
            \draw (-1.25,-0.05) to [out=70,in=180] (0,0.67) to [out=0,in=110] (1.25,-0.05);
            \draw (-1.8,-0.4) .. controls (-1.6,0.2) and (-0.8,-0.4) .. (0,-0.4) .. controls (0.8,-0.4) and (1.6,0.2) .. (1.8,-0.4);
            \draw (-1.7,-0.5) .. controls (-1.5,0) and (-0.8,-0.55) .. (0,-0.55) .. controls (0.8,-0.55) and (1.5,0) .. (1.7,-0.5);
            \draw[color=white, line width = 1mm] (-1.35,-0.35) .. controls (-1.6,-0.8) and (-2.2,-0.5) .. (-2,0.2) .. controls (-1.8,0.9) and (-0.5,1.1) .. (0,1.1) .. controls (0.5,1.1) and (1.8,0.9) .. (2,0.2) .. controls (2.2,-0.5) and (1.6,-0.8) .. (1.35,-0.35);
            \draw (-1.35,-0.35) .. controls (-1.6,-0.8) and (-2.2,-0.5) .. (-2,0.2) .. controls (-1.8,0.9) and (-0.5,1.1) .. (0,1.1) .. controls (0.5,1.1) and (1.8,0.9) .. (2,0.2) .. controls (2.2,-0.5) and (1.6,-0.8) .. (1.35,-0.35);
            \draw[color=white, line width = 1mm] (-1.2,-0.38) .. controls (-1.6,-1.1) and (-2.35,-0.5) .. (-2.17,0.15) .. controls (-2,1.1) and (-0.5,1.25) .. (0,1.25) .. controls (0.5,1.25) and (2,1.1) .. (2.17,0.15) .. controls (2.35,-0.5) and (1.6,-1.1) .. (1.2,-0.38);
            \draw (-1.2,-0.38) .. controls (-1.6,-1.1) and (-2.35,-0.5) .. (-2.17,0.15) .. controls (-2,1.1) and (-0.5,1.25) .. (0,1.25) .. controls (0.5,1.25) and (2,1.1) .. (2.17,0.15) .. controls (2.35,-0.5) and (1.6,-1.1) .. (1.2,-0.38);
            \draw (-1.9,-0.7) .. controls (-2.1,-1.3) and (-0.5,-1.4) .. (0,-1.4) .. controls (0.5,-1.4) and (2.1,-1.3) .. (1.9,-0.7);
            \draw (-1.73,-0.79) .. controls (-1.8,-1.1) and (-0.4,-1.25) .. (0,-1.25) .. controls (0.4,-1.25) and (1.8,-1.1) .. (1.73,-0.79);
            \node at (0,0) {$\ast$};

            \draw (0,1.2) ellipse (0.3 and 0.4);
            \draw [->] (0.2,1.5) to [out=45,in=145] (2.5,1.5);
            \node at (0.8,2.75) {\small{blow-up of cross-section}};
            \node at (0.575,2.4) {\small{of longitudinal sums}};

            \draw  (2.5,1) rectangle (3.5,-1);
            \foreach \x in {0,...,3}
            \draw[fill=lightgray] (2.75,-0.4+0.3*\x) .. controls (2.75,-0.25+0.3*\x) and (2.85,-0.25+0.3*\x) .. (2.9,-0.25+0.3*\x) -- (3.1,-0.25+0.3*\x) .. controls (3.15,-0.25+0.3*\x) and (3.25,-0.25+0.3*\x) .. (3.25,-0.4+0.3*\x) .. controls (3.25,-0.55+0.3*\x) and (3.15,-0.55+0.3*\x) .. (3.1,-0.55+0.3*\x) -- (2.9,-0.55+0.3*\x) .. controls (2.85,-0.55+0.3*\x) and (2.75,-0.55+0.3*\x) .. (2.75,-0.4+0.3*\x);

            \draw (0,-1.3) ellipse (0.3 and 0.4);
            \draw[->] (0.2,-1.6) to [out=-35,in=230] (2.6,-1.3);
            \node at (1.2,-2.2) {\small{blow-up of cross section}};
            \node at (5,0) {\small{3 longitudinal sums}};

            \node at (-6,0) {\hspace{0.5em}};
        \end{tikzpicture}
        \caption{}
        \label{fig:connsum}
    \end{figure}

    The link complement in Fig.\ \ref{fig:rambingd} is that of a $m$-ramified Borromean ring. With the addition of the (true) relation $\hat{l} = e$ to the fundamental group of the link complement (corresponding to the triviality of the $A$-holonomy along the meridians to $L$) the group becomes freely generated by meridinal loops $a_j$ and $b_j$ to the $2m$ components in Fig.\ \ref{fig:rambingd}. This is because filling the hole in Fig.\ \ref{fig:rambingd} results in a $2m$-component unlink of solid tori. Notice the presence of the surface relation:
    \begin{equation}\label{eq:abj}
        \hat{m} = \prod_{i=1}^m[a_i,b_i]
    \end{equation}

    This is what we earlier referenced as the relationship between Bing doubling and the commutator structure on $\de S_g$, the Siefert surface for $\hat{m}$.

    Thus equation \ref{eq:abj}  can be implemented geometrically by filling the $2m$ deleted solid tori in Fig.\ \ref{fig:rambingd} with four copies each of the model $C_1$, thus realizing $\hat{m}_i$ on the boundary of a homology solid torus $\mathcal{C}_1$ with a sufficiently elaborate fundamental group that $g_i = \mathrm{hol}_A(l_i) = \mathrm{hol}_A(\hat{m}_i)$ extends:
    \begin{equation}
        \begin{tikzpicture}[anchor=base, baseline]
            \node at (-0.3,0.4) {$\pi_1(\mathcal{C}_1)$};
            \draw [->] (0.4,0.5) -- (1.3,0.5);
            \node at (2.4,0.4) {$\homeo_0(X)$};
            \node at (-0.3,-0.4) {$\hat{m}_i$};
            \draw [->] (0.3,-0.3) -- (1.7,-0.3);
            \node at (2.4,-0.4) {$\operatorname{hol}(l)$};
            \node[rotate around={90:(-0.1,0)}] at (-0.1,0) {$\in$};
            \node[rotate around={90:(2,0)}] at (0.5,2.1) {$\in$};
        \end{tikzpicture}
    \end{equation}

    To understand how this works we must explain the role on conjugation and inverse. Inverses of holonomies are implemented by applying the automorphism
    $\begin{vmatrix}
        -1 & 0 \\ 0 & - 1
    \end{vmatrix}$
    to the torus $\de C_1$. Up until now we have been careless about base points. Holonomies are really just conjugacy classes until base points are introduced. How does one describe the composition of meridinal holonomies $g_1$ and $g_2$ when two solid tori $T_1$ and $T_2$ are summed along a common longitude to form a third?
    \begin{equation}
        T = T_1\ \tikz[anchor=base,baseline]{\draw (0,0.4) -- (0,0.05) -- (0.2,0.05) -- (0.2,-0.12); \draw (0,0.25) -- (0.2,0.25) -- (0.2,0.05);}_{longitude}\ T_2
    \end{equation}

    Since in all cases the longitudes represent to the identity, the problem dimensionally reduces to asking what the pinch map $p: S^1 \ra S^1 \vee S^1$ represents given that the two wedge factors (petals) separately represent to $g_1$ and $g_2$ respectively. The answer is that we have considerable choice. By reparametrizing the fiber $X$ over the base point of $T_2$, also the base point of $S^1 \vee S^1$, by an arbitrary element of $\tau \in \homeo(X)$ we can realize any conjugacy class of the form $g_1 g_2^\tau$. This key idea, the ``simplicity trick," undergirds our ``designer extension" of arbitrary meridional holonomies.
    
    \begin{lemma}[Simplicity Lemma]\label{lm:simplicity}
        Suppose we have a collection of flat $X$-bundles
        \begin{tikzpicture}[anchor=base, baseline]
            \node at (0,1.1) {$X$};
            \draw [->] (0.2,1.2) -- (0.8,1.2);
            \node at (1.1,1.1) {$E_i$};
            \draw [->] (1.1,1) -- (1.1,0.4);
            \node at (1.1,0) {$T_i$};
            \draw [decorate,decoration={brace,amplitude=5pt}]
            (1.4,1.4) -- (1.4,-0.1);
            \draw [decorate,decoration={brace,amplitude=5pt}]
            (-0.2,-0.1) -- (-0.2,1.4);
        \end{tikzpicture}
        over homology-solid tori $T_i$ with a constant holonomy $\rho_i(l_i) = \rho(l)$ around a marked longitude $l_i \subset \de T_i$ and a variable meridianal holonomy $\rho_i(m_i) \in \homeo(X)$. If $\rho(l) = \id \in \homeo(X)$, there exists a flat bundle
        \begin{tikzpicture}[anchor=base, baseline]
            \node at (0,1.1) {$X$};
            \draw [->] (0.2,1.2) -- (0.8,1.2);
            \node at (1.1,1.1) {$E$};
            \draw [->] (1.1,1) -- (1.1,0.4);
            \node at (1.1,0) {$T$};
        \end{tikzpicture}
        over a homology-solid torus $T$, a longitudinal sum drawn from $\{T_i\}$, realizing arbitrary holonomy in the normal closure $\langle \langle \rho_i(m_i) \rangle \rangle \subset \homeo(X)$. In general if $\rho(l) \neq \id$, holonomies in the subgroup generated by elements of the form $(\rho_i(m_i))^{c_j}$ can be realized, where the $c_j$ are arbitrary elements in the centralizer $Z(\rho(l)) \subset \homeo(X)$.
    \end{lemma}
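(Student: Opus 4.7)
The plan is to assemble $T$ as an iterated longitudinal sum of pieces drawn from $\{T_i\}$, building the flat bundle $E \ra T$ from the $E_i$ by controlling the gluing data on each collar. The key step is the two-piece sum $T = T_1 \nsharp_{\mathrm{long}} T_2$, glued along a collar $S^{k-2} \times I$. Both $E_1|_{\mathrm{collar}}$ and $E_2|_{\mathrm{collar}}$ are flat bundles with longitudinal holonomy $\rho(l)$, hence abstractly isomorphic. Since a flat $X$-bundle over the collar is classified by its holonomy and the group of flat-bundle automorphisms covering the identity of the base is precisely the centralizer $Z(\rho(l)) \subset \homeo(X)$, the space of flat gluings of $E_1|_{\mathrm{collar}}$ to $E_2|_{\mathrm{collar}}$ forms a $Z(\rho(l))$-torsor, parametrized by a twist $\tau$. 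Tracing a basepath across the collar, the new meridian $m$ of $T$ is homotopic to $m_1 \cdot m_2$, and accounting for the fiber identification by $\tau$ yields meridional holonomy of the form $\rho_1(m_1) \cdot \rho_2(m_2)^\tau$ with $\tau \in Z(\rho(l))$ arbitrary---this is the ``simplicity trick'' flagged in the preceding discussion.

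Iterating by induction on the number of pieces, an $n$-fold longitudinal sum realizes on its new meridian any product $\rho_{i_1}(m_{i_1})^{\tau_1} \cdots \rho_{i_n}(m_{i_n})^{\tau_n}$ with arbitrary $\tau_j \in Z(\rho(l))$. When $\rho(l) = \id$, the orientation-reversing involution $(l,m) \mapsto (l^{-1}, m^{-1})$ of $\de T_i$ preserves the (trivial) longitudinal holonomy, so inverses $\rho_i(m_i)^{-1}$ are freely available as additional building blocks; since $Z(\id) = \homeo(X)$ the realizable set becomes the full normal closure $\langle\langle \rho_i(m_i) \rangle\rangle$. When $\rho(l) \ne \id$ I would broaden the palette $\{T_i\}$ up front to include, for each $i$, a mirror block with meridional holonomy $\rho_i(m_i)^{-1}$ and the same longitudinal holonomy $\rho(l)$, so that iterated sums with centralizer twists realize every element of the subgroup $\langle \rho_i(m_i)^c : c \in Z(\rho(l)) \rangle$.

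The main obstacle I expect is the gauge-theoretic identification of the collar-bundle gluing group with $Z(\rho(l))$, together with the precise basepoint and path-system bookkeeping needed to make the twist $\tau$ appear as honest conjugation in the composed meridional holonomy---in particular, verifying that a suitably chosen transverse basepath across each collar does not accumulate extra powers of $\rho(l)$. Once the two-piece case is nailed down with this bookkeeping, the inductive passage to the $n$-piece statement is routine.
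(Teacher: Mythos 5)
Your proof follows essentially the same route as the paper's (which is only two sentences, deferring to the ``simplicity trick'' paragraph immediately preceding the lemma): after a longitudinal sum the new meridian reads as a product of the old meridians, and the only freedom is a flat gluing twist across the collar, which must commute with $\rho(l)$ to globalize---giving conjugation by elements of $Z(\rho(l))$, or by arbitrary elements of $\homeo(X)$ when $\rho(l)=\id$. The ``obstacle'' you flag---identifying the collar gluing group with $Z(\rho(l))$---is the standard centralizer description of flat-bundle gauge transformations and you have already stated the correct answer, so it is not a genuine gap; likewise your handling of inverses via the $(l,m)\mapsto(l^{-1},m^{-1})$ involution matches the paper's remark about applying $\bigl(\begin{smallmatrix}-1&0\\0&-1\end{smallmatrix}\bigr)$ to $\de C_1$, with the same caveat (which both you and the paper acknowledge, the latter by noting the $\rho(l)\neq\id$ case is ``difficult to exploit in practice'') that this flips the longitudinal holonomy as well.
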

    
    \begin{proof}
        When $\rho(l) = \id$, the preceding discussion suffices. (Note: this is the case used in the present paper.) For general $\rho(l)$ the choice of reparameterizations of the fiber $X$ is necessarily restricted to those commuting with $\rho(l)$ as others do not globalize over
        \begin{tikzpicture}[anchor=base, baseline]
            \node at (0,1.1) {$X$};
            \draw [->] (0.2,1.2) -- (0.8,1.2);
            \node at (1.1,1.1) {$E_i$};
            \draw [->] (1.1,1) -- (1.1,0.4);
            \node at (1.1,0) {$T_i$};
        \end{tikzpicture}
        . In practice, it seems quite difficult to exploit Lemma \ref{lm:simplicity} when $\rho(l) \neq \id$.
    \end{proof}

    To complete the proof of Theorem \ref{thm:extend1} we need to describe $W_0$ (and $W$) and verify they are ssc.

    To build the 4D bordism $W_0$ one must take longitudinal sums of the slice-complement $\lbar{C}$, extending along an additional interval factor, the longitudinal boundary connected sums of copies of $C$. These continue to be relative $H_\ast$-cobordisms on $(S^1 \times D^2,\de)$. Schematically the cobordism $W$ is shown back in Figure \ref{fig:sscholo}, where the $\lbar{\mathcal{C}}_1$ are represented as thin hatched strips although, in reality, they are ssc.

    This completes the construction of $W$ and the extending flat connection on $V^\ast = \de_+ W$. There is a retraction $W \xrightarrow{r} V$ induced from the construction $\lbar{\mathcal{C}}_1 \xrightarrow{r} S^1 \times D^2$. To see that $W \xrightarrow{r} V$ has the structure of a semi-$s$-cobordism, note that Figure \ref{fig:sscholo} describes a product except for the hatched strips, previously denoted by $W_0$. These hatched rectangles are of the form: $(\lbar{\mathcal{C}}_1;\mathcal{C}_1,S^1 \times D^2)$. $S^1 \times D^2$ is a terminal object \footnote{Explicitly to construct the degree one map $(\mathcal{C}_1, \de) \ra (S^1 \times D^2, \de)$, recall $K = \operatorname{ker}(H_1(\de \mathcal{C}_1;\Z) \ra H_1(\mathcal{C}_1;\Z)) \cong \Z$ (a consequence of Lefschitz duality and the sequences for pairs). Consequently, we find a 1-submanifold in $\de \mathcal{C}_1$ bounding a 2-submanifold $\Delta \subset \mathcal{C}_1$. Map $\Delta$ degree one to $\ast \times D^2 \subset S^1 \times D^2$ and then extend (degree one) ($\mathcal{C}_1 \setminus \Delta, \de) \ra (S^1 \times D^2 \setminus \ast \times D^2, \de)$. Note that $\de(\mathcal{C}_1\setminus\Delta)$ is connected, making the extension degree one. More generally, higher genus handle bodies are also terminal objects in the corresponding sense.} in category of three manifolds with boundary $S^1 \times S^1$ and degree one maps, identity on boundary. Using obstruction theory we construct a relative retraction: $\lbar{\mathcal{C}_1} \xrightarrow{r} S^1 \times D^2$. While $r$ is not a deformation retraction because $\pi_1(\lbar{\mathcal{C}_1}) \not\cong \pi_1(S^1 \times D^2) \cong \Z$, it is a homology isomorphism with integral coefficients.

    Because the blocks $\lbar{\mathcal{C}}_1$ are glued to Bing doubles and therefore map trivially into $\pi_1(W)$, the inclusion $V \ra W$ induces a $\Z[\pi_1]$-homology isomorphism, $\pi_1 = \pi_1(V) \cong \pi_1(W)$ (but distinct from $\pi_1(V^\ast)$ which is much larger). This shows that $V \ra W$ is a homotopy equivalence. The point here, is that triviality of the map $\pi_1(\lbar{\mathcal{C}}_1) \ra \pi_1(W)$ means that $\pi_1(W)$ acts on lifts of $\lbar{\mathcal{C}}_1$ only by permuting blocks. It is actually a simple homotopy equivalence since, lifted to the universal cover, the inclusion of the non-product regions, i.e.\ the inclusions: $(S^1 \times D^2)_{\text{lifted copy}} \hookrightarrow \lbar{\mathcal{C}}_{\text{lifted copy}}$ are all $\Z$-homology isomorphisms (see \cite{mil66} for an introduction to Whitehead torsion).
    
    So far we have built $W$ and seen that is a ssc from its initial end $V$ to its final end $V^\ast$, and seen that the original $X$-bundle over $V$ can be cut and glued to make a flat $X$-bundle over $V^\ast$. But returning to line (\ref{eq:natmaps}) we see that the epimorphism $\alpha$ can be used, a block at a time, to extend $\pi_1(\mathcal{C}_1) \ra \homeo_0(X)$ over $\pi_1(\lbar{\mathcal{C}}_1)$. This extension, at the group level, gives the extension of the flat bundle over $\mathcal{C}_1$ to a bundle over $\lbar{\mathcal{C}}_1$ which glues up with the initial bundle on the remainder of $W$ (the penultimate $W \setminus$2-handles) to produce the $X$-bundle over our final $W$ with all claimed properties.
\end{proof}

As we go beyond $\dim(V) = 3$ the strategy which proved Theorem \ref{thm:extend1} will not work. It is true that we may use our models $\lbar{\mathcal{C}}_1$ to ``homologically'' remove relative 2-handles from the $V^\ast$ end of a cobordism $(W;V,V^\ast)$. But because they are only deleted in a homological sense, the attaching regions of the 3-handles do not reach the new $V^\ast$ and we are stuck: We do not see spherical classes in the modified $V^\ast$ to surger.

\begin{proof}[Proof of Theorem \ref{thm:extend2}]
    The plan is to follow Meigniez's proof of MT via quasi-complementary foliations up to its final step \cite{mei}. In this proof, the \emph{problem} is now not just a bundle but also a Haefliger structure on that bundle. Meigniez treats a range of smoothness from $C^1$ to $C^\infty$, and states that his results will later be extended to the bilipschitz category in a subsequent paper. The problem is \emph{solved} (the Haefliger structure made ``regular'' so that it induces a foliation) up to singularities along certain ``fissures'' $\Sigma_i$ in the total space $B$ of
    \begin{tikzpicture}[anchor=base, baseline]
        \node at (0,1.1) {$X$};
        \draw [->] (0.2,1.2) -- (0.8,1.2);
        \node at (1.1,1.1) {$B$};
        \draw [->] (1.1,1) -- (1.1,0.4);
        \node at (1.1,0) {$V$};
    \end{tikzpicture}
    .
    
    In briefest outline here is the setup we borrow from \cite{mei} with line numbers referring to that paper. 
    \begin{tikzpicture}[anchor=base, baseline]
        \node at (0,1.1) {$X$};
        \draw [->] (0.2,1.2) -- (0.8,1.2);
        \node at (1.1,1.1) {$B$};
        \draw [->] (1.1,1) -- (1.1,0.4);
        \node at (1.1,0) {$V^p$};
        \node at (1.35,0.65) {\small{$\pi$}};
    \end{tikzpicture}
    is endowed with a Haefliger structure $\Gamma$, that is a (germ of a) foliation $\mathcal{F}$ of dimension $q$ on 
    \begin{tikzpicture}[anchor=base, baseline]
        \node at (0,1.1) {$\R^q$};
        \draw [->] (0.2,1.2) -- (0.8,1.2);
        \node at (1.1,1.1) {$\tau$};
        \draw [->] (1.1,1) -- (1.1,0.4);
        \node at (1.1,0) {$B$};
        \node at (1.65,0.6) {\small{$Z$}};
        \draw[->] (1.3,0.2) to[out=45,in=-90] (1.45,0.65) to[out=90,in=-45] (1.3,1.1);
    \end{tikzpicture}
    transverse to the fibers of $\tau$, where $\tau$ is the bundle of vertical tangents in $B$. If the zero section $Z$ were transverse to $\mathcal{F}$, pulling back $\mathcal{F}$ to $B$ would flatten the original bundle 
    \begin{tikzpicture}[anchor=base, baseline]
        \node at (0,1.1) {$X$};
        \draw [->] (0.2,1.2) -- (0.8,1.2);
        \node at (1.1,1.1) {$B$};
        \draw [->] (1.1,1) -- (1.1,0.4);
        \node at (1.1,0) {$V^p$};
    \end{tikzpicture}
    . The approach is to exploit the tools Thurston introduced in the 70s to modify $\mathcal{F}$ to maintain its defining property while making it as close as possible to transverse to the zero section $Z$. Meigniez denotes the non-transverse locus by $\de \Sigma$ for boundary (total fissure core) [Prop 4.21]. Since we only address the boundary, we shorten the terminology to \emph{total fissure} and denote it by $\Sigma$, and its components \emph{fissures} by $\Sigma_i$. They enjoy various technical properties (see Definitions 3.2 through 3.8) specifying exactly how transversality between $\mathcal{F}$ and $Z$ fails on $\Sigma$. The total fissure $\Sigma \subset B$ is a $(p-2)$-dimensional submanifold of many components. There is some freedom in choosing the topology of its components $\Sigma_i$ (see his Remark 3 and our comments below). Each component $\pi$-projects 1-1 to $V^p$ (see Claim 4.15 and Property IV in proof of Theorem 1.8, page 50), but the projection restricted to all of $\Sigma$ is merely an immersion. The punch line of his proof is to resolve the transversality obstruction by doing a generalized surgery to $V$ along a sequence of codimension 2-submanifolds obtained by projecting the sequence of fissures $\{\Sigma_i\}$ to $B$. These generalized surgeries allow little control over the topology of the cobordism $W$. Our approach is to replace the projected fissure components with the models of type $\lbar{\mathcal{C}}_1$, from our collection. This allows much more control of $W$. We realize $W$ as a ssc, but the cost is we must dimensionally stabilize $W$ to match the topology of the successive fissures to that of our models. A richer supply of models could yield a stronger result.

    When the regularity is less than or equal to class $C^1$, as we have noted in section 1, the bundle $B$ can be augmented, without making essential choices, to a Haefliger structure $\Gamma$ with normal bundle the vertical tagents, $\tau$. If $V$ has boundary, $\Gamma$ is required to be regular near $\de V$. $\Gamma$ is, by definition, a foliation $\mathcal{F}$ on $\tau$, which is transverse to fibers. $\Gamma$ is called regular if it is also transverse to $Z(B)$, the zero-section of $\tau$. Using a fine triangulation $K$ of the total space of $\tau$, and many ingenious adaptations of Thurston's constructions \cite{thur76}, control of $\mathcal{F}$ w.r.t.\ $K$ is gradually obtained. Although it is not possible to homotope $\Gamma$ to full regularity i.e.\ to make the Haefliger structure everywhere transverse to the fiber, this is achieved in the complement of the total fissure $\Sigma$ a dimension $p-2$ embedded submanifold $\Sigma \subset B$, $p = \dim(V)$. (See Fig.\ 6 of in \cite{mei} where our $\Sigma$ is called the $(\text{[fissure core]} \cap M \times 1) = \de \Sigma$. We have no need of the part of $\Sigma$ in the interior of the cobordisms $W$ so we have simplified the notation slightly, calling the ``projected boundary of the fissure'' simply the ``fissure''. Also in \cite{mei} our $B$ is written $M$.) From the simplicial, actually ``prismatic,'' structure $K$, it is deduced that 1) the components of $\Sigma$ all have small diameter 2) the topology of each component can be separately controlled to be either $S^1 \times S^{p-3}$ or the $(p-2)$-torus $(S^1)^{p-2}$, and 3) individually each component of $\Sigma$ $\pi$-projects 1-1 as embedded submanifolds of $V$, which we still call the fissures, $\pi\Sigma_i$, and $\Sigma \coloneqq \bigcup_{i=1}^l \Sigma_i$.

    If the fissures $\pi\Sigma_i$ were pairwise disjoint, the replacement device we describe next would prove a stronger version of Theorem \ref{thm:extend2} without the stabilizing manifold $Q$. The replacement models (see our section 3) we know how to build require a fissure $\pi\Sigma_i$ to have spherical factors i.e.\ have the form $M \times S^k$ for some closed manifold $M$ and $1 \leq k \leq p-3$. It is possible that with more powerful machinery for building replacement models with more general boundary conditions, Thm.\ \ref{thm:extend2} might still be proven without stabilization.

    A priori, it may seem that Meigniez's fissure models are just what is needed as each contains a circle factor. The difficulty comes from the apparently unavoidable crossing of (projected) fissures $\pi \Sigma_i \cap \pi \Sigma_j \neq \varnothing \subset V$, and the structure of his final induction. Although \cite{mei} and this paper heal the initial fissure $\pi\Sigma_1$ ($\pi\Sigma_l$ in \cite{mei}) by replacing its neighborhood with different models, in our case $\mathcal{C}_1$, in both cases there is a degree one map (called ``a'' in the last paragraphs before the end of section 5 \cite{mei}) from the replacement to what is replaced. Since we will not reference the bundle $B$ again, let us simplify notation and drop the $\pi$ from $\pi\Sigma$. A generic argument based on Thom transversality for which Meigniez credits Poenaru (see Lemma 3.12) allows $\Sigma_i$, $i > 1$, to be ``pulled'' through the replacement $\mathcal{C}_1$, and become the new (projected) fissures for a cobordant problem over a modified $V$, called $V^\ast$. These new fissures, $\Sigma_2^\pr,\dots, \Sigma_l^\pr$, are still individually embedded in $V^\ast$, still small when measured by the natural retraction back to $V$, but their topology may now be quite complicated, with no circle factor present. Since the replacement model in \cite{mei} is factor preserving, $\Sigma_i \times D^2$ is replaced with $\Sigma_i \times S_g$, where $S_g$ is the genus $g$ surface with one boundary component. The loss of control on the topology of $\Sigma_i^\pr$, $i \geq 2$, presents no problem for him, the old $S_g$ model still suffices, and indeed he is not trying to control the topology of the cobordism $W$ anyway. Because our replacement models $\mathcal{C}_1^p$ require an $S^k$ factor, $1 \leq k \leq p-2$, we resort to crossing with (any) such factor to augment $\Sigma_j^\pr \subset V^\ast$ to $\Sigma_j^\pr \times S^k \subset V^\ast \times S^k$, $2 \leq j \leq l$, a stabilized problem.

    Proceeding then in $l$ steps, healing first $\Sigma_1 \subset V$ to get $V^\ast$, healing $\Sigma_2^\pr \times S^k \subset V^\ast \times S^k$ to get $\Sigma_3^{\pr} \times S^{k^\pr} \subset V^{\ast\ast} \times S^{k^\pr}$ we end up building, after stabilizing by a product of $l-1$ spheres, a composition of $l$ ssc $W$. $W$ is seen to be a ssc by composing retractions. The result is an ssc from the original problem stabilized by $Q$, a product of $(l-1)$-spheres, to a solution over $V$\raisebox{5pt}{$\scriptstyle{\underbrace{\ast \cdots \ast}_{l\text{-times}}}$}.

    The reason stabilization is \emph{not} necessary in Thm.\ \ref{thm:extend2} when $p = \dim(V) = 3$, is in that case all $\Sigma_i \subset V$ are circles and project pairwise disjointly by general position.

    Having described the relation with \cite{mei} here are some further details of our construction.

    First, for concreteness, assume we choose $S^1 \times S^{p-3}$ for the topology of $\Sigma_1$. The initial replacement of $\Sigma_1 \times D^2$ with our model $\mathcal{C}_1^k$ is via the cobordism (which we built as a slice complement) $W_1 \coloneqq \lbar{\mathcal{C}}_1^k$. $\Sigma_2^\pr$ is some $(p-2)$-submanifold of $V^\ast$ but we do not know its topology. Crossing everything now with $S^{k_1}$, the new fissure is $\Sigma_2^\pr \times S^{k_1}$, its neighborhood is $\Sigma_2^\pr \times S^{k_1} \times D^2$ with boundary $\Sigma_2^\pr \times S^{k_1} \times S^1$. We have in our stockpile a model $\mathcal{C}_1^{k_1+1}$, with a ssc $\lbar{\mathcal{C}}_1^{k_1+1}$ to $D^{k_1+1} \times S^1$. Now cross this model with $\Sigma_2^\pr$ and glue the result to $V^\ast$ times $S^{k_1}$ to form the next chamber, $W_2$. So far we have built
    \begin{equation}
        W_1 \times S^{k_1} \cup W_2
    \end{equation}

    Sequentially build $W$:
    \begin{equation}
        W = W_1 \times S^{k_1} \times \cdots \times S^{k_{l-1}} \cup W_2 \times S^{k_2 \times \cdots \times k_{l-1}} \cup \cdots \cup W_{l-1} \times S^{k_{l-1}} \cup W_l
    \end{equation}

    Each chamber is a ssc by the argument used in Theorem \ref{thm:extend1}, so the composition $W$ is as well.

    To complete the proof we must check that the replacements $\mathcal{C}_1^k$ can be engineered to solve the representation extension problem. In the stabilization direction, $S^k$, even if we chose $k = 1$, the holonomy is trivial owing to the \emph{product} form of stabilization.

    A nice feature of \cite{mei} is that he allows us to choose arbitrarily \emph{any} nontrivial fiber $X$ holonomy $\phi$ (see his Theorem A\textprime) around the normal circle to $\Sigma_i$. So at first it looks like the technology of bounded simplicity does not need to be invoked. However, there could be a small gap. In the proofs of \cite{mei}, ``any'' means any nontrivial $C^1$-diffeomorphism in the identity component $\mathrm{Diff}^1_0(X)$. However, our simplest models $C_1^k$ produce holonomy $h$ (as in Thm.\ \ref{thm:extend1} proof), $h \in \mathrm{bilipschitz}_0(X)$.

    In private communications, I have learned Meigniez's result does hold in the bilipschitz category (details to appear), so using this we can avoid the more complicated replacement used in the proof of Theorem \ref{thm:extend1} and build $\mathcal{C}_1$ and $\lbar{\mathcal{C}}_1$ as a straightforward Bing double of a pair $((\lbar{C}_1, \lbar{C}_1^\pr);(C_1,C_1^\pr))$, and demand from \cite{mei} Theorem A\textprime of section 3 that the fissures be built with normal holonomy $= [\alpha_{C_1},\alpha_{C_1^\pr}]$, the commutator of two models with different choice of $h$ so as not to commute, see our section 3 on dynamics.
    
    However, as an alternative to relying on Meigniez's recent extension to the bilipschitz category, the next paragraph does supply the details needed to pass from a model with normal holonomy in $\operatorname{bilipschitz}(X)$ or even in $\homeo(X)$ to fissures with holonomy in $C^1$.

    We employ the same tricks, longitudinal sums and iterated Bing doubles, used to prove Theorem \ref{thm:extend1}. The failure of transversality across each fissure $\Sigma_i$ is confined in the vertical (X) coordinate to a ball $D_i^q \subset X$, which we write simply as $D^q$. What is necessary is to express a (any) nontrivial element $\te \neq \id \in \mathrm{Diff}_0^1(D^q,\de)$ as a product of commutators of products of conjugates of the bilipschitz homeomorphism $h: D^q \ra D^q, \id\big\vert_{\de D^q}$, as in the proof of Thm.\ \ref{thm:extend1}, which acts on that ball $D^q \subset X$ via a faithful action of $\tld{\Delta}(2,3,7)$.
    \[
        \te = \prod_{k=1}^K\left[\prod_{j=1}^{J_k} (h^{\pm})^{a_{kj}}, \prod_{j=1}^{J_k} (h^{\pm})^{b_{kj}}\right] \text{ for } a_{kj},b_{kj} \in \homeo_0(D^q,\de),\ 1 \leq k \leq K
    \]

    This formula models the construction of an composition $(\lbar{\mathcal{C}}_1, \mathcal{C}_1)$ of the simple model $(\lbar{C}_1, C_1)$ along the lines summarized in Figures \ref{fig:rambingd} and \ref{fig:connsum}. The model $(\lbar{\mathcal{C}}_1, \mathcal{C}_1)$ solves the stabilize cleft-replacement problem allowing the cobordisms $W_i$ to be built.
\end{proof}

\begin{proof}[Proof of Theorem \ref{thm:four}]
    The proof is entirely formal. It uses the Mather-Thurston theorem \cite{mc}, including the relative version where the bundle is \emph{already} flat near the boundary of the base, and an induction over a handle decomposition of $V$.

    Let $\mathcal{H}$ be a relative handle decomposition of $V$. It is trivial to reduce to the discrete structure group $\homeo^\delta(X)$ over $\de V \cup \mathcal{H}_0 \cup \mathcal{H}_1$, over the boundary and the 0- and 1-handles. We call this reduction a \emph{solution} over the 1-handles. Now consider a 2-handle $h_2$. By M-T, on $h_2$ relative to the solution over its attaching region\footnote{I use here the convention that the attaching region of a handle $h$ is written $\de_+h$ and the co-attaching region $\de_-(h)$.} $\de_+(h_2)$, the bundle over $h_2$ is bordant, rel $\de_-(h_2)$, to a \emph{solution} over a surface $k_2$, and a small thickening of $k_2$, where the structure group will also be discrete. Observe that $(h_2,\de h_2) \simeq (D^2, \de D^2) \times D^{p-2}$ enjoys the universal extension property of accepting degree one maps:
    \begin{equation}
        \begin{tikzpicture}[anchor=base, baseline]
            \node at (-0.3,0.5) {$\Sigma^2$};
            \draw [dashed,->] (0.3,0.6) -- (1.9,0.6);
            \node at (2.4,0.5) {$D^2$};
            \node at (-0.3,-0.5) {$\de \Sigma$};
            \draw [->] (0.3,-0.4) -- (1.9,-0.4);
            \node at (2.4,-0.5) {$\de D^2$};
            \node at (-0.3,0.05) {$\hookuparrow$};
            \node at (2.3,0.05) {$\hookuparrow$};
            \node at (1.1,0.7) {\small{deg 1}};
            \node at (1.1,-0.3) {\small{deg 1}};
            \node at (4.3,-0.5) {,\quad $D^2 = \mathrm{core}(h_2)$};
            \node at (-4,0) {\hspace{0.25em}};
        \end{tikzpicture}
    \end{equation}

    Apply this property to the relative surface $k_2$. This retracts $k_2$, where the solution has so far been built, from $\de V \cup \text{0-handles} \cup \text{1-handles} \cup \EuScript{N}(\cup k_2\text{'s})$, back to $\de V \cup$0, 1, 2-handles.

    Now if $h_3$ is a 3-handle, its attaching region now receives maps from a union of solutions $k_2$ (one $k_2$ for every facet of the attaching map of $h_3$ to the lower index handles). In fact, the just constructed cobordisms over the (cores of the) 2-handles fit together with $\mathrm{core}(h_3)$ to define a flattening problem over $\de V \cup$0, 1, 2, 3-handles (solvable again by MT). Its solution $k_3$ again has a universal property:
    \[
        \begin{tikzpicture}[anchor=base, baseline]
            \node at (-0.3,0.5) {$M^3$};
            \draw [dashed,->] (0.3,0.6) -- (1.9,0.6);
            \node at (2.4,0.5) {$D^3$};
            \node at (-0.3,-0.5) {$\de M^3$};
            \draw [->] (0.3,-0.4) -- (1.9,-0.4);
            \node at (2.4,-0.5) {$\de D^3$};
            \node at (-0.3,0.05) {$\hookuparrow$};
            \node at (2.3,0.05) {$\hookuparrow$};
            \node at (1.1,0.7) {\small{deg 1}};
            \node at (1.1,-0.3) {\small{deg 1}};
        \end{tikzpicture},
    \]
    and so maps degree 1 to $h_3$ (see the schematic in Figure \ref{fig:hmaps} which has been reduced by one dimension for legibility).

    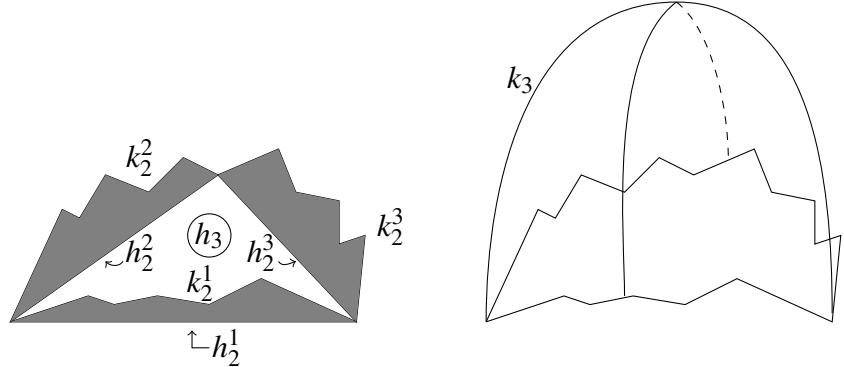
\begin{figure}[!ht]
        \centering
        \begin{tikzpicture}[scale=1.15]
            \draw (-2.2,-1.5) -- (0.2,0.2) -- (1.8,-1.5) -- cycle;
            \draw (-2.2,-1.5) -- (-1.6,-0.2) -- (-1.4,-0.3) -- (-1.1,0.2) -- (-0.6,0) -- (-0.2,0.4) -- (0.2,0.2) -- (0.9,0.5) -- (1.1,0) -- (1.6,-0.1) -- (1.6,-0.6) -- (1.9,-0.5) -- (1.8,-1.5) -- (0.7,-1) -- (0.1,-1.3) -- (-0.5,-1.2) -- (-1,-1.3) -- (-1.3,-1.2) -- (-2.2,-1.5);
            \fill[color = gray] (-2.2,-1.5) -- (-1.6,-0.2) -- (-1.4,-0.3) -- (-1.1,0.2) -- (-0.6,0) -- (-0.2,0.4) -- (0.2,0.2)-- (0.9,0.5) -- (1.1,0) -- (1.6,-0.1) -- (1.6,-0.6) -- (1.9,-0.5) -- (1.8,-1.5) -- (0.2,0.2) -- cycle;
            \fill[color=gray] (1.8,-1.5) -- (0.7,-1) -- (0.1,-1.3) -- (-0.5,-1.2) -- (-1,-1.3) -- (-1.3,-1.2) -- (-2.2,-1.5) -- cycle;
            \node at (-0.7,0.4) {$k_2^2$};
            \node at (2.2,-0.4) {$k_2^3$};
            \node at (0,-1.05) {$k_2^1$};
            \node at (-0.7,-0.7) {$h_2^2$};
            \draw[->] (-0.9,-0.8) to[out=240,in=-45] (-1.1,-0.8);
            \node (v1) at (0.1,-0.5) {$h_3$};
			\draw  (v1) circle (0.25);
            \node at (0.7,-0.7) {$h_2^3$};
            \draw[->] (0.9,-0.8) to[out=-60,in=200] (1.1,-0.8);

            \draw (5.5-2.2,-1.5) -- (5.5-1.6,-0.2) -- (5.5-1.4,-0.3) -- (5.5-1.1,0.2) -- (5.5-0.6,0) -- (5.5-0.2,0.4) -- (5.7,0.2) -- (5.5+0.9,0.5) -- (6.6,0) -- (7.1,-0.1) -- (7.1,-0.6) -- (5.5+1.9,-0.5) -- (5.5+1.8,-1.5) -- (6.2,-1) -- (5.6,-1.3) -- (5.5-0.5,-1.2) -- (5.5-1,-1.3) -- (5.5-1.3,-1.2) -- (5.5-2.2,-1.5);
            \draw (5.5,2.2) to[out=180,in=90] (3.3,-1.5);
            \draw (5.5,2.2) to [out=0,in=90] (7.3,-1.4);
            \draw (5.5,2.2) .. controls (4.7,1.6) and (4.9,-0.5) .. (4.9,-1.2);
            \draw[dashed] (5.5,2.2) .. controls (5.9,1.9) and (6.1,1) .. (6.1,0.37);
            \node at (2.9,2.6) {$k_3$ is the solution to the flattening problem over the 3-handle $h_3$};
			\node at (0.3,-1.8) {$h_2^1$};
            \draw[->] (0.1,-1.8) -- (-0.1,-1.8) -- (-0.1,-1.6);
            \node at (3.7,1.3) {$k_3$};
        \end{tikzpicture}
        \caption{Bordisms $b_3^i$ between $h_2^i$ and $k_2^i$ are shaded.}\label{fig:hmaps}
    \end{figure}

    The new problem $h_3 \cup (\cup_i b_3^i)$ is bordent via $b_4$ to the solution $k_3$, i.e.\ a reduction to $\homeo^\delta(X)$. So far we have described how to construct a cobordism, with retraction, over the 2 and 3-handles.

    Proceed in the same way, handle by handle, to build $W$ as a union of the bordisms $b_d$. The desired retraction to $V$ is a composition of a union of the universal degree one maps to the handle-cores of $\mathcal{H}$.
\end{proof}

Next we turn to Theorem \ref{thm:refine1}\textprime, whose initial data include a more specialized structure over $\de V$. The proof of Theorem \ref{thm:refine1}\textprime\ has an ad hoc, cut and glue, aspect that does not obviously bring a bundle cobordism along with it. However, at the end it can be ``backfilled'' using elementary homotopy theory.

\begin{proof}[Proof of Theorem \ref{thm:refine1}\textprime]
    Finally we use our second set of models $C_2$ and $\mathcal{C}_2$. The starting point is the presumed extension of the flat $g$-connection over $V^\pr$, $\de V^\pr = \Sigma$. As is now familiar, there is some framed linke $L^\pr \subset V^\pr$ so that $V = V^\pr \slash \mathcal{S}(L^\pr)$. Because we need to propagate the connection $A$ across the surgeries, they should not be standard but will be based on these homology models.

    From line (\ref{eq:natmaps}) by using the composition:
    \begin{equation}\label{eq:secondm}
        \begin{split}
            & \pi_1(C_2) \ra \pi_1(P_2) \overset{\cong}{\ra} \operatorname{BI} \\
            & \hat{m} \mapsto \alpha(\hat{m}),\text{ order } \alpha(\hat{m}) > 2
        \end{split}
    \end{equation}
    where BI denotes the binomial icosahedral group, the fundamental group of the Poincar\'{e} homology sphere \cite{ks79}.

    It is well known that every compact, simply-connected, semi-simple Lie group $G$ contains a copy of $\operatorname{SU}(2)$ (see \cite{hum}), thus for any such $G$ line (\ref{eq:secondm}) extends to line (\ref{eq:choice1})
    \begin{equation}\label{eq:choice1}
        \pi_1(C_2) \ra \pi_1(P_2) \xrightarrow{\cong} \operatorname{BI} \overset{\mathrm{inj}}{\hookrightarrow} \operatorname{SU}(2) \overset{\mathrm{inj}}{\hookrightarrow} G
    \end{equation}
    and $\alpha(\hat{m})$ maps to a non-central element $g \in G$.

    We need to find an inclusion, $\operatorname{SU}(2) \hookrightarrow G$, which normally generates $G$, so that $g$, above, will also normally generate $G$. $G$ admits a decomposition, canonical up to permutation, into a direct sum of $J$ simple Lie groups. Pick an $\operatorname{SU}(2)$ subgroup for each factor of $G$ and let $\Delta$ be the diagonal within Cartesian product of these subgroups:
    \begin{equation}
        \operatorname{SU}(2) \cong \Delta \subset \prod_{j=1}^J \operatorname{SU}(2)_j \subset G
    \end{equation}

    The normal closure of $\Delta$ in $G$ does not lie in any product factor, as $\Delta$ does not. This means that the normal closure $\langle \Delta \rangle_G = G$.

    But $\alpha(\hat{m})$ normally generates any $\operatorname{SU}(2)$ in which it lies (since it is not central). Thus under the composition
    \[
        \begin{tikzpicture}[anchor=base, baseline]
            \node at (0,0.4) {$\pi_1(C_2) \ra \pi_1(P_2) \ra BI \ra \Delta \ra G$};
            \node at (-2.3,-0.4) {$\hat{m}$};
            \draw [->] (-2,-0.3) -- (2.4,-0.3);
            \draw (-2,-0.2) -- (-2,-0.4);
            \node at (2.7,-0.4) {$g$};
            \node[rotate around={90:(-0.1,0)}] at (-2.1,0) {$\in$};
            \node[rotate around={90:(2,0)}] at (0.8,2.1) {$\in$};
        \end{tikzpicture}
    \]
    $\hat{m}$ maps to $g$ normally generating $G$.

    Now the simplicity trick, essential to Theorem \ref{thm:extend1}, allows us to build $\mathcal{C}_2$ from $C_2$ along the pattern illustrated in Figures \ref{fig:rambingd} and \ref{fig:connsum}. Since $G$ is a semi-simple Lie group, it is perfect so the general element $f \in G$ can be written first as a product of commuters:
    \begin{equation}
        f = [a_m,b_m] \cdots [a_1,b_1]
    \end{equation}
    and then $a$'s (and $b$'s) can be further written as products of conjugates of $g$ and $g^{-1}$. This leads the of Fig.\ \ref{fig:rambingd} (with ramification $=m$) and a pattern similar to Fig.\ \ref{fig:connsum} with the multiplicities of longitudinal boundary connected sums being the number of conjugates of $g$ and $g^{-1}$ needed to express $a_i$, $b_i$, $1 \leq i \leq m$.

    These representation extensions ensure the flat extension of $A$ over $V^\ast$. The semi-$s$-cobordisms $W_0$ and $W$ are now constructed precisely as in the proof of Theorem \ref{thm:extend1}. In this case all the group theory has been done inside $G$; it was not necessary to make an enlargement to $\homeo_0(G)$.

    The construction so far has built a bundle only over $\de W = V \cup \Sigma \times I \cup V^\ast$, flat on $\Sigma \times I \cup V^\ast$, but we do not yet have a $G$-bundle over $W$. The hypothesis that $\pi_1(G) \cong 0$ enables us to fill in the bundle over $W$ by obstruction theory. The possible obstructions lie in:
    \begin{equation}
        H_{k+1}(W, \de W; \pi_k(\mathrm{BG})) \cong H_k(V, \Sigma; \pi_{k-1}(G))
    \end{equation}
    using that $W$ is ssc and $\Omega \mathrm{BG} \simeq G$. But since $\dim V = 3$, we may restrict to $k \leq 3$, a range in which the coefficient groups vanish. Using the fact that for finite dimensional Lie groups, $\pi_2(G) \cong 0$, the bundle over $\de W$ extends over $W$.
\end{proof}

\begin{proof}[Proof of Theorem \ref{thm:refine2}]
    Following \cite{mil58} and \cite{wood}, up to sign, Euler class $\chi(B)$ may be written as:
    \begin{equation}\label{eq:gammas}
        \abs{\chi(B)} = \abs{p^{-1} \phi_g(\Gamma_1, \dots, \Gamma_{2g})} \in \Z = \operatorname{ker}(\tld{\operatorname{PSL}}(2,\R) \ra \operatorname{PSL}(2,\R))
    \end{equation}
    where we are using part of the fibration sequence for a universal covering space
    \[
        1 \ra \Z \cong \pi_0(\text{fiber}) \xrightarrow{p} \tld{\operatorname{PSL}}(2,\R) \ra \operatorname{PSL}(2,\R) \ra 1
    \]
    and $\Gamma_1, \dots, \Gamma_{2g}$ are (arbitrary) lifts to $\tld{\operatorname{PSL}}(2,\R)$ of
    \[
        \gamma_1 = \rho(a_1),\ \gamma_2 = \rho(b_2), \dots, \gamma_{2g} = \rho(b_g) \in \operatorname{PSL}(2,\R)
    \]
    images of the usual generators, $\{a_1,b_1,\dots,a_g,b_g\}$, for $\pi_1(\Sigma_g)$ under some representation $\rho: \pi_1(\Sigma_g) \ra \operatorname{PSL}(2,\R)$ inducing $B$. These generators satisfy the familiar relation indicated on line (\ref{eq:famrel}).
    \begin{equation}\label{eq:famrel}
        \phi_g \coloneqq [a_1,b_1] \dots [a_g, b_g] = 1
    \end{equation}

    The idea is that in the Lie algebra $\operatorname{sl}(2,\R)$ we have generators and bracket structure constants:
    \begin{equation}
        [E,F]_{\text{Lie}} = H,\ [H,E]_{\text{Lie}} = 2E,\ [H,F]_{\text{Lie}} = -2F
    \end{equation}
    where $E$ and $F$ are boosts and $H$ infinitesimally generates rotation.

    Exponentiating and applying the group theoretic bracket we find
    \begin{equation}\label{eq:haserr}
        [e^{\epsilon E}, e^{\epsilon F}] = e^{\epsilon^2H} + O(\epsilon^3)
    \end{equation}
    where $O(\epsilon^3)$ is an error term, $\frac{O(\epsilon^3)}{\epsilon^3} \leq \mathrm{constant}$.

    This suggests attempting to build $\rho$ according to the formula
    \begin{equation}\label{eq:attpf}
        \rho(a_i) = e^{\epsilon E},\ \rho(b_i) = e^{\epsilon F},\ 1 \leq i \leq g
    \end{equation}

    This is close, but will not actually obey the relation (\ref{eq:famrel}) because of the error term on line \ref{eq:haserr}, and a less serious integrality issue. So this choice of $\rho$ is \emph{not} a representation.

    We give three (different) ways to resolve this discrepancy, because in different contexts each can be useful.

    1. The simplest is to recall that the elliptic elements of $\operatorname{PSL}(2,\R)$, i.e.\ those lying in compact 1-parameter subgroups, form an open subset of $\operatorname{PSL}(2,\R)$ (in both smooth and Zariski topologies), and that each of these circle-subgroups is conjugate to the subgroup of rotations. The small error, due to the Campbell-Baker-Hausdorff formula on (\ref{eq:haserr}) means that the group commutator $c = [e^{\epsilon E}, e^{\epsilon F}]$ will not lie in $\operatorname{SO}(2) \subset \operatorname{PSL}(2,\R)$ but in a nearly conjugate subgroup $\operatorname{SO}(2)^\gamma$, $\gamma \in \EuScript{N}_{O(\epsilon^3)}(\id)$, an element within an $O(\epsilon^3)$-neighborhood of $\id \in \operatorname{PSL}(2,\R)$. Setting $\rho(a_i) = \gamma e^{\epsilon E}\gamma^{-1}$ and $\rho(b_i) = \gamma e^{\epsilon F}\gamma^{-1}$ now produces a commutator in $\operatorname{SO}(2)$. By continuity $\epsilon$ can be adjusted (prior to the conjugating) to make the commutator of finite order in any desired range. This allows us to satisfy the surface relation (\ref{eq:famrel}).

    2. A second method is to use the well known fact \cite{wood} that every element $c$ of $\operatorname{PSL}(2,\R)$ can be written as a single commutator: $c = a_0^{-1}b_0^{-1}a_0b_0$, and if $\lVert c \rVert = O(\epsilon^2)$, $\lVert a_0 \rVert, \lVert b_0 \rVert = O(\epsilon)$. Given this fact, we can just ignore the $O(\epsilon^3)$ deviation of $c$ from the $\operatorname{SO}(2)$ with infinitesimal generator $H$ in $\operatorname{PSL}(2,\R)$ and let the $p$th powers of $c$ evolve along its elliptic subgroup. At the end we ``patch up'' the accumulated discrepancy $c^p \neq \id$ by writing $c^p = \rho(a_0^{-1})\rho(b_0^{-1})\rho(a_o)\rho(b_0)$ over one additional genus, to achieve a well-defined representation. Actually this is slightly too simple since we have lost norm-control of $a_0$ and $b_0$. But this is easily restored by instead devoting $\leq \frac{1}{\epsilon^2}$ genera to the patching by choosing $k$th root $(c^p)^{\frac{1}{k}}$ and $(c^p)^{\frac{1}{k}} = \rho(a_i^{-1})\rho(b_i^{-1})\rho(a_i)\rho(b_i)$, $k + 1 \leq i \leq 0$.

    3. A final method is in the spirit of the inverse function theorem. Of course the differential of the Lie bracket is identically zero, but its 2-jet is very useful
    \begin{equation}
        [rH + sE + tF, r^\pr H + s^\pr E + t^\pr F] = (st^\pr - ts^\pr)H + 2(rs^\pr - r^\pr s)E + 2(rt^\pr - r^\pr t)F
    \end{equation}

    At $(r,s,t,r^\pr,s^\pr,t^\pr) = (0,0,0,0,0,0)$, second order independent variations (of both signs) of the coefficient of $H(E,F)$ are generated by the variables $(s,t^\pr)((r,s^\pr),(r^\pr,t))$. This freedom to move independently in all three directions of $\operatorname{sl}(2,\R)$ allows the construction of a Cauchy sequence in $\EuScript{N}_\epsilon \times \EuScript{N}_\epsilon$ whose limit is mapped by bracket to any desired point in $\EuScript{N}_{\epsilon^2}$, $\EuScript{N}_r$ the radius $r$ neighborhood of the identity in $\operatorname{SL}(2,\R)$. As with the usual proof of the inverse function theorem, exact solutions may be found near approximate solutions. This approach is the most direct in that the approximate representation $\rho$ inspired by the relations in the Lie algebra is simply perturbed to an exact group representation.

    Now return to line (\ref{eq:attpf}) and modify our provisional definition of the representation $\rho$ by setting
    \begin{equation}
        \rho(a_i) = e^{\epsilon E^\pr},\ \rho(b_i) = e^{\epsilon F^\pr},\ 1 \leq i \leq g
    \end{equation}
    where $g = \lfloor\frac{2\pi\chi(B)}{\epsilon^2}\rfloor + O(\epsilon^{-1})$, and with $E^\pr$ and $F^\pr$ defined by one of the three methods above. Referring to line (\ref{eq:gammas}), choosing $g$ of this size enables the lift to $\tld{\operatorname{PSL}}(2,\R)$ to be controlled to achieve
    \begin{equation}
        \abs{\chi(B)} = \big|p^{-1}\phi_{g+1}\big(\underbrace{\tld{e}^{\epsilon E^\pr},\tld{e}^{\epsilon F^\pr},\dots,\tld{e}^{\epsilon E^\pr},\tld{e}^{\epsilon F^\pr}}_{g \text{ pairs}}, \tld{v}, \tld{w}\big)\big|
    \end{equation}
    where $\tld{\hphantom{m}}$ indicates a lift to $\tld{\operatorname{PSL}}(2,\R)$. That is, approximately $\frac{2\pi\chi(B)}{\epsilon^2}$ commutators suffice to produce a translation in $\tld{\operatorname{PSL}}(2,\R)$ of length $2\pi\chi(B)$, where each individual commutator produces a translation of approximately $\epsilon^2$. This completes the proof of Theorem \ref{thm:refine2}.
\end{proof}

\bibliography{references}   

\end{document}